 \newtheorem{theorem}{Theorem}[section]
 \newtheorem{Prop}[theorem]{Proposition}
 \newtheorem{Lem}[theorem]{Lemma}
\newcommand{\ba}{\begin{array}}
\newcommand{\ea}{\end{array}}
\newcommand{\beq}{\begin{equation}}
\newcommand{\eeq}{\end{equation}}
 \numberwithin{equation}{section}
\begin{document}
\title{Topological properties of self-similar fractals with one parameter}

\author{Jun Jason Luo} 
\address{College of Mathematics and Statistics, Chongqing University,  401331 Chongqing, China
	\newline\indent Institut f\"ur Mathematik, Friedrich-Schiller-Universit\"at Jena, 07743 Jena, Germany}
\email{jasonluojun@gmail.com}

\author{Lian Wang} \address{College of Mathematics and Statistics,  Chongqing Uinversity, Chongqing, 401331, P.R. China} 
\email{lwang@cqu.edu.cn}

\keywords{self-similar tile, connectedness, disk-likeness, quasi-periodic tiling}

\thanks{The research is supported by the NNSF of China (No.11301322), the Fundamental and Frontier Research Project of Chongqing (No.cstc2015jcyjA00035)}

\subjclass[2010]{Primary 28A80; Secondary 52C20, 54D05}
\date{\today}

\begin{abstract}
 In this paper, we study two classes of planar self-similar fractals $T_\varepsilon$ with a shifting  parameter $\varepsilon$. The first one is a class of self-similar tiles by shifting $x$-coordinates of some digits. We give a detailed discussion on the disk-likeness ({\it i.e., the property of being a topological disk}) in terms of $\varepsilon$. We also prove that $T_\varepsilon$ determines a quasi-periodic tiling if and only if $\varepsilon$ is rational. The second one is a class of self-similar sets by shifting diagonal digits. We give a necessary and sufficient condition for $T_\varepsilon$ to be connected.
\end{abstract}

\maketitle

\section{\bf Introduction}
Let $A$ be a $d\times d$ integer expanding matrix (i.e., all of its eigenvalues are strictly larger than one in modulus), let ${\mathcal D}=\{d_1,\dots,d_N\}\subset {\mathbb R}^d$ be a digit set with $N=|\det(A)|$. Then we can define an {\it iterated function system (IFS)} $\{S_{j}\}_{j=1}^{N}$ where $S_j$ are affine maps
\[
S_{j}(x)=A^{-1}(x+d_{j}),  \quad x\in\mathbb{R}^d.
\]
Since $A$ is expanding, each $S_{j}$ is a contractive map under a suitable norm \cite{LaWa} of $\mathbb{R}^{d}$, there is a unique nonempty compact subset $T:=T(A,{\mathcal D})\subset{\mathbb{R}}^d$  \cite{Hu}  such that
$$T=\bigcup_{j=1}^NS_j(T)=A^{-1}(T+{\mathcal D}).$$
The set $T$ also has the radix expansion
\begin{equation}\label{eq1.1}
T=\left\{\sum^{\infty}_{k=1}A^{-k}d_{j_k}:d_{j_k}\in\mathcal{D}\right\}.
\end{equation}
We call $T$ a {\it self-affine set} generated by the pair $(A, {\mathcal D})$ (or the IFS $\{S_{j}\}_{j=1}^{N}$). Moreover, if $T$ has non-void interior (i.e., $T^\circ\ne\emptyset$), then there exits a discrete set ${\mathcal J}\subset {\mathbb R}^d$ satisfying $$T+{\mathcal J}={\mathbb R}^d \quad \text{and}\quad (T^\circ+t)\cap (T^\circ +t')=\emptyset \quad \text{with}\quad t\ne t', t,t'\in {\mathcal J}.$$
We call such $T$ a {\it self-affine tile} and $T+{\mathcal J}$ a {\it tiling} of ${\mathbb R}^d$. In particular, if $A$ is a similarity, then $T$ is called a {\it self-similar set/tile}.

Since the fundamental theory of self-affine tiles was established by Lagarias and Wang (\cite{LaWa},\cite{LaWa1},\cite{LaWa2}), there have been considerable interests in the topological structure of self-affine tiles $T$,  including but not limited to the connectedness of  $T$ (\cite{GrHa},\cite{HaSaVe},\cite{KiLa},\cite{AkGj},\cite{DeLa}),  the boundary $\partial T$ (\cite{AL},\cite{LeLu3},\cite{LAT}), or the interior $T^\circ$ of a connected tile $T$  (\cite{NT},\cite{NT2}). Especially in $\mathbb{R}^2$, the study on the disk-likeness of $T$   ({\it i.e., the property of being a topological disk}) has attracted a lot of attentions  (\cite{BaWa},\cite{LeLa},\cite{LRT},\cite{Ki},\cite{DeLa}). For other related works, we refer to \cite{LLY},\cite{LeLu},\cite{LeLu2},\cite{LLX}
 and a survey paper \cite{AT}.

Any change on the matrix $A$ and the digit set $\mathcal{D}$ may lead to some change on the topology of $T(A,\mathcal{D})$. To simplify the analysis on the relations between those two types of ``changes'', one may fix an expanding matrix $A$ and focus on particular choices of the digit set $\mathcal{D}$. Recently Deng and Lau \cite{DeLa} considered a class of planar self-affine tiles $T$ that are generated by a lower triangular expanding matrix and  product-form digit sets. They gave a complete characterization on both connectedness and disk-likeness of $T$.

Motivated by the above results, in this paper, we investigate the topological properties of the following two classes of self-similar fractals in $\mathbb{R}^2$. Assume that $A$ is a diagonal matrix with equal nonzero entries, hence $A$ is a similarity. In the first class, we consider a kind of  digit sets ${\mathcal D}_\varepsilon$ with a shift $\varepsilon$  on the $x$-coordinates of some digits. We obtain an analogous result to \cite{DeLa}.

\begin{theorem}\label{mainthm}
Let $p$ be an integer with $|p|=2m+1$ where $m\in {\mathbb N}$, let $\varepsilon\in {\mathbb R}$. Suppose $T_\varepsilon$ is the self-similar set generated by  $A=\left[\begin{array}{cc}
p& 0 \\
0 & p
\end{array}\right]$ and
$${\mathcal D}_\varepsilon=\left\{\left[\begin{array}{c}
i+b_j\\
j
\end{array}\right]: b_j=\frac{1-(-1)^j}{2}\varepsilon, \quad i, j\in \{0,\pm 1, \dots, \pm m\}\right\}.$$ Then $T_\varepsilon$ is a self-similar tile. Moreover,

(i) if $|\varepsilon|<|p|$, then $T_\varepsilon$ is disk-like;

(ii) if $|p|^n\leq |\varepsilon|<|p|^{n+1}$ for $n\geq 1$, then $T_{\varepsilon}^\circ$ has $|p|^n$ components and every closure of the component is disk-like. (see Figure \ref{fig1})
\end{theorem}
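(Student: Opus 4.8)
The plan rests on a decomposition of $T_\varepsilon$ read off from the radix expansion \eqref{eq1.1}. Put $\phi(j)=\tfrac{1-(-1)^j}{2}\in\{0,1\}$, so $b_j=\varepsilon\phi(j)$. A point $\sum_k A^{-k}d_{j_k}$ of $T_\varepsilon$ has coordinates $\big(\sum_k p^{-k}i_k+\varepsilon\sum_k p^{-k}\phi(j_k),\ \sum_k p^{-k}j_k\big)$, in which the ``free'' digits $i_k\in\{0,\pm1,\dots,\pm m\}$ occur only in the first entry and independently of the $j_k$; hence
\[
T_\varepsilon=\big([-\tfrac12,\tfrac12]\times\{0\}\big)+L_\varepsilon E,\qquad L_\varepsilon=\begin{bmatrix}\varepsilon&0\\0&1\end{bmatrix},\qquad E:=T\big(A,\{(\phi(j),j)^{t}:|j|\le m\}\big).
\]
Here $E$ is a self-similar set with $|p|$ contractions, so $\mu(E)=0$; also $\pi_y(E)=[-\tfrac12,\tfrac12]$ and $\pi_x(E)=\{\sum_k p^{-k}\epsilon_k:\epsilon_k\in\{0,1\}\}$, which lies in the interval with endpoints $0$ and $1/(|p|-1)$. (I take $p>0$; the sign $p<0$ only changes bookkeeping.) The first thing to extract is the slice structure $(T_\varepsilon)_y=[-\tfrac12,\tfrac12]+\varepsilon\,(E)_y$, reducing everything to the sections of $E$. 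Since the $y$-projections of the $|p|$ subpieces of $E$ partition $[-\tfrac12,\tfrac12]$ into intervals of length $1/|p|$, and balanced base-$p$ expansions are non-unique only in the form $\dots d_l\,\overline{(-m)}=\dots(d_l{+}1)\,\overline{m}$, the self-similar structure shows $(E)_y$ is either a single point or a two-point set at distance $|p|^{-l}$ for some $l\ge1$ (the tails $\overline{(-m)},\overline{m}$ contribute equally to $\sum p^{-k}\phi(j_k)$ because $\phi(-m)=\phi(m)$). So $\operatorname{diam}(E)_y\le1/|p|$ for all $y$, and $(T_\varepsilon)_y$ is a union of one or two translates of $[-\tfrac12,\tfrac12]$ with centres within $|\varepsilon|/|p|$ of each other.

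That $T_\varepsilon$ is a self-similar tile now follows at once: each section contains a translate of $[-\tfrac12,\tfrac12]$ and $\pi_y(T_\varepsilon)=[-\tfrac12,\tfrac12]$, so $\mu(T_\varepsilon)\ge1>0$; since $|\mathcal D_\varepsilon|=|p|^2=|\det A|$, the standard theory (Bandt) gives that $T_\varepsilon$ tiles $\mathbb R^2$ by translations. (Indeed $\mu(T_\varepsilon)=1$, as $(E)_y$ is a single point for all but countably many $y$.)

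For (i), let $|\varepsilon|<|p|$. Then $|\varepsilon|/|p|<1$, so by the above each section is a \emph{single} interval $[a(y),b(y)]$ with $b(y)-a(y)\in[1,2)$, i.e. $T_\varepsilon=\{(x,y):-\tfrac12\le y\le\tfrac12,\ a(y)\le x\le b(y)\}$ is the closed region between the graphs of $a$ and $b$; in particular it is connected. I would finish by showing $\partial T_\varepsilon$ is a Jordan curve: $a,b$ are regulated, continuous off the countable set of non-unique $y$, and at each such $y^{*}$ both jump up by the same amount $|\varepsilon|\,|p|^{-l}<1$, so the graphs of $a$ and $b$ completed by their (boundary) horizontal jump segments are simple arcs meeting only along the nondegenerate edges $(T_\varepsilon)_{\pm1/2}$; Schoenflies then gives disk-likeness. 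Controlling $a,b$ at this dense set of jumps — and checking the completed graphs really are simple arcs — is the technical heart; an alternative is to compute the neighbour set of $T_\varepsilon$ and apply the Bandt--Wang disk-likeness criterion (in the lattice case $\varepsilon\in\mathbb Z$, and its non-lattice version otherwise).

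For (ii), write $\varepsilon=p^{n}\delta$ with $1\le|\delta|<|p|$ and split off the first $n$ digits: $\varepsilon\sum_k p^{-k}\phi(j_k)=\delta\big(M+\sum_{l\ge1}p^{-l}\phi(j_{n+l})\big)$ with $M=\sum_{k=1}^{n}p^{\,n-k}\phi(j_k)\in\mathbb Z_{\ge0}$, and $\sum_k p^{-k}j_k=p^{-n}(N+y')$ with $N=\sum_{k=1}^{n}p^{\,n-k}j_k$, $y'\in[-\tfrac12,\tfrac12]$. As $(j_1,\dots,j_n)$ varies, $N$ runs bijectively over $|p|^{n}$ consecutive integers and $M=M(N)$ becomes a function of $N$, giving
\[
T_\varepsilon=\bigcup_{N}\Big(\big(\delta M(N),\,p^{-n}N\big)+\operatorname{diag}(1,p^{-n})\,T_\delta\Big),
\]
a union of $|p|^{n}$ affine copies of $T_\delta$ lying in the $|p|^{n}$ consecutive horizontal strips $p^{-n}(N+[-\tfrac12,\tfrac12])$, which tile $[-\tfrac12,\tfrac12]$. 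By (i) each copy is disk-like, hence the closure of its connected interior. It then suffices to show $T_\varepsilon^{\circ}$ meets no boundary line $y=p^{-n}(N+\tfrac12)$ between consecutive copies, for then $T_\varepsilon^{\circ}$ is the disjoint union of the $|p|^{n}$ copy-interiors and we are done. The key point: incrementing $N$ flips exactly one parity bit $\phi(j_k)$ (a carry $m\mapsto-m$ preserves parity), so $|M(N{+}1)-M(N)|$ is a power of $|p|$, hence $\ge1$; since $(T_\delta)_{\pm1/2}=[-\tfrac12,\tfrac12]+\delta\phi(m)/(|p|-1)$ is a single unit interval (again by $\phi(-m)=\phi(m)$), the top edge of copy $N$ and the bottom edge of copy $N{+}1$ are horizontally offset by $|\delta|\,|M(N{+}1)-M(N)|\ge1$. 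If $(x_0,y_0)$ with $y_0=p^{-n}(N+\tfrac12)$ were interior, a vertical-segment argument forces $x_0-\delta M(N)$ and $x_0-\delta M(N{+}1)$ into the same unit interval $(T_\delta)_{1/2}$, so the offset is $\le1$; hence it equals $1$ and $x_0$ sits at an endpoint of $(T_\delta)_{1/2}$, whereupon the half-disk of $T_\delta$ needed below the top edge (resp.\ above the bottom edge) of the relevant copy would force $\max(E)_\cdot>\sup\pi_x(E)$ (resp.\ $\min(E)_\cdot<\inf\pi_x(E)$) near that endpoint — impossible, since $\phi(m)/(|p|-1)$ is one of the two endpoints of the interval containing $\pi_x(E)$. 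The main obstacles here are (a) the parity-flip property of base-$p$ successors and (b) this corner analysis.
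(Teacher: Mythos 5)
Your cross-section decomposition $T_\varepsilon=([-\tfrac12,\tfrac12]\times\{0\})+L_\varepsilon E$, the tile property via $\mu(T_\varepsilon)>0$, and the reduction of (ii) to $|p|^n$ affine copies of $T_\delta$ ($\varepsilon=p^n\delta$, $1\le|\delta|<|p|$) with horizontal edge-offsets $|\delta|\,|M(N{+}1)-M(N)|\ge1$ are all correct and run parallel to the paper's subdivision into the pieces $G_{j_1\cdots j_n}$ and its Proposition on their intersections. The problem is part (i), on which (ii) also depends: the disk-likeness of $T_\varepsilon$ for $|\varepsilon|<|p|$ is not actually proved. You reduce it to the assertion that the graphs of the section-endpoint functions $a,b$, ``completed by their horizontal jump segments,'' are simple arcs, and you yourself flag this as the technical heart. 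But this does not follow from the jump bookkeeping you give: $a$ and $b$ have jumps at the dense set of $y$ with two radix expansions, and the total jump mass at level $l$ is of order $(1-\tfrac1{|p|})$ \emph{per level}, so these functions have infinite variation and the left and right boundaries of $T_\varepsilon$ are genuinely fractal curves. A ``completed graph'' of a function with densely many jumps and unbounded variation need not be an arc at all (one must at least control one-sided moduli of continuity to get local connectedness, and then rule out self-intersections), so the Jordan-curve/Schoenflies step is precisely the content of the theorem, not a finishing touch. The alternative you mention (Bandt--Wang) is also only named, and in its standard form requires a lattice tiling, which fails for irrational $\varepsilon$.

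The paper closes exactly this gap by a different device that your write-up never invokes: it proves that $T_\varepsilon$ (and each rescaled piece $G_{\mathbf 0}$) is a self-similar \emph{tile with connected interior} --- by exhibiting an explicit interior point of $T_\varepsilon$ in each intersection $f_{i,j}(T_\varepsilon)\cap f_{i+1,j}(T_\varepsilon)$ and in each vertical overlap when $|\varepsilon|<p^{n+1}$, and chaining the pieces in a zigzag order via the Hata criterion --- and then applies the theorem of Luo--Rao--Tan that a self-affine tile in the plane with connected interior is disk-like. That single citation replaces your entire Jordan-curve analysis. If you want to keep your cross-section picture, the fix is to use it only to locate interior points in the overlaps of adjacent first-level pieces (your slice formula $(T_\varepsilon)_y=[-\tfrac12,\tfrac12]+\varepsilon(E)_y$ makes this easy), conclude $T_\varepsilon^\circ$ is connected, and then quote Luo--Rao--Tan; the remainder of your argument for (ii) then goes through essentially as the paper's does.
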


In fact, Theorem \ref{mainthm} can be proved in a more general setting where $A=\left[\begin{array}{cc}
p& 0 \\
0 & q
\end{array}\right]$ without causing much difficulty. We omit this for the completeness of the paper.

Moreover, we further consider the quasi-periodic tiling property of  $T_\varepsilon$ (the definition will be recalled in Section 3). Let ${\mathcal D}_{\varepsilon,k}={\mathcal D}_\varepsilon+ A{\mathcal D}_\varepsilon+\cdots+ A^{k-1}{\mathcal D}_\varepsilon$ and ${\mathcal D}_{\varepsilon,\infty}=\bigcup_{k=1}^\infty{\mathcal D}_{\varepsilon,k}$. We prove that

\begin{theorem}\label{mainthm2}
With the same $(A, {\mathcal D}_\varepsilon)$ as in Theorem \ref{mainthm}, $T_\varepsilon+{\mathcal D}_{\varepsilon,\infty}$ is a quasi-periodic tiling if and only if $\varepsilon$ is a rational number.
\end{theorem}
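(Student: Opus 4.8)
The plan is to reduce the statement to a combinatorial property of a single bi-infinite sequence, obtained from a horizontal-strip decomposition of the tiling. Iterating $A^kT_\varepsilon = T_\varepsilon + \mathcal D_{\varepsilon,k}$ and using that $\bigcup_{k}A^kT_\varepsilon = \mathbb R^2$ (which comes out of the proof that $T_\varepsilon$ is a self-similar tile), one first checks that $\mathcal T_\varepsilon \Let \{T_\varepsilon + d : d\in\mathcal D_{\varepsilon,\infty}\}$ is genuinely a tiling of $\mathbb R^2$. Since $|p| = 2m+1$, every point of $T_\varepsilon$ has second coordinate $\sum_{k\ge1}p^{-k}j_k$ with $|j_k|\le m$, hence in $[-\tfrac12,\tfrac12]$, while the second coordinates appearing in $\mathcal D_{\varepsilon,\infty}$ are exactly $\mathbb Z$ (the $y$-digits of $\mathcal D_\varepsilon$ form the balanced complete residue system $\{0,\pm1,\dots,\pm m\}$ modulo $p$). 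Thus $\mathcal T_\varepsilon$ splits into rows indexed by $n\in\mathbb Z$, the $n$-th row lying in $\mathbb R\times[n-\tfrac12,n+\tfrac12]$, and inspecting the radix expansion shows that the fiber of $\mathcal D_{\varepsilon,\infty}$ over the height $n$ equals $\mathbb Z + \varepsilon M(n)$, where
\[
M(n) \Let \sum_{l\ge0}\frac{1-(-1)^{d_l(n)}}{2}\,p^l \in\mathbb Z_{\ge0},
\]
with $d_l(n)$ the $l$-th digit of $n$ in the balanced base-$p$ number system. Hence the $n$-th row of $\mathcal T_\varepsilon$ is the $0$-th row translated by $(\varepsilon M(n),n)$, and the whole tiling is completely encoded --- modulo the fixed shape $T_\varepsilon$ and the integer row-lattice --- by the sequence $\theta_n \Let \varepsilon M(n)\bmod 1\in\mathbb R/\mathbb Z$, equivalently by its difference sequence $\delta_n \Let \theta_{n+1}-\theta_n$.

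The next step is the reduction. Because only consecutive rows can meet and each row is $\mathbb Z$-periodic, one checks that $\mathcal T_\varepsilon$ has finite local complexity if and only if the set $\{\delta_n : n\in\mathbb Z\}$ is finite, and that --- assuming finite local complexity --- a patch occupying rows $n_0,\dots,n_0+k$ recurs translated by $(h,v)$ precisely when $v\in\mathbb Z$ and the block $(\delta_{n_0},\dots,\delta_{n_0+k-1})$ recurs at position $n_0+v$ (the shift $h$ being then forced modulo $1$). Hence $\mathcal T_\varepsilon$ is quasi-periodic --- i.e.\ of finite local complexity and repetitive, in the sense recalled in Section 3 --- if and only if $\{\delta_n\}$ is finite \emph{and} $(\delta_n)_{n\in\mathbb Z}$ is uniformly recurrent. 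The arithmetic input needed is a formula for $M(n+1)-M(n)$: analyzing the carry propagation when $1$ is added in the balanced base-$p$ system gives, in all cases, $M(n+1)-M(n)=\sigma(n)\,p^{L(n)}$, where $L(n)\ge0$ is the length of the carry chain and $\sigma(n)\in\{+1,-1\}$, both depending only on the digits of $n$ in positions $0,\dots,L(n)$. Two consequences follow: (a) $L(\cdot)$ attains every value in $\mathbb Z_{\ge0}$ --- for $p>0$, e.g., $L\big((p^l-1)/2\big)=l$ --- so $\{\delta_n\}\supseteq\{\pm\varepsilon p^l\bmod 1 : l\ge0\}$; and (b) if $n'\equiv n\pmod{p^{L+1}}$ with $L\ge L(n)$, then $L(n')=L(n)$, $\sigma(n')=\sigma(n)$, hence $\delta_{n'}=\delta_n$ --- i.e.\ $(\delta_n)$ has the Toeplitz property.

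Both implications then follow quickly. If $\varepsilon$ is irrational, $\{\pm\varepsilon p^l\bmod 1 : l\ge0\}$ is infinite, so by (a) $\{\delta_n\}$ is infinite, $\mathcal T_\varepsilon$ lacks finite local complexity, and it is not quasi-periodic. If $\varepsilon = a/b$ in lowest terms, then $\delta_n = a\sigma(n)p^{L(n)}/b\bmod 1$ lies in the finite set $\tfrac1b\mathbb Z/\mathbb Z$, so $\mathcal T_\varepsilon$ has finite local complexity; and for any block $(\delta_{n_0},\dots,\delta_{n_0+k})$, putting $\ell^\ast = \max_{0\le i\le k}L(n_0+i)$, consequence (b) shows this block recurs at every $n\equiv n_0\pmod{p^{\ell^\ast+1}}$, hence with bounded gaps --- so $(\delta_n)$ is uniformly recurrent and $\mathcal T_\varepsilon$ is quasi-periodic.

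The main obstacle, I expect, is the bookkeeping of the first two steps: making the strip decomposition rigorous --- in particular the claim that within each strip the translates $T_\varepsilon + (N,n)$, $N\in\mathbb Z$, already tile $\mathbb R\times[n-\tfrac12,n+\tfrac12]$, which must be inherited from $\mathcal T_\varepsilon$ being a tiling --- and translating ``quasi-periodic tiling'' faithfully into the two conditions on $(\delta_n)$. The carry computation behind $M(n+1)-M(n)=\sigma(n)p^{L(n)}$ is elementary but requires some care when $p<0$, where successive carries alternate in sign; there the explicit witness for the surjectivity of $L(\cdot)$ and the precise value of the exponent change slightly, while consequences (a) and (b) --- all the argument actually uses --- remain valid.
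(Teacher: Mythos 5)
Your proposal is correct, but it is organized quite differently from the paper's proof, so a comparison is worth recording. The paper argues directly along the two clauses of the definition: it shows the local isomorphism property holds for \emph{every} $\varepsilon$ by the self-replicating structure alone (given $\Sigma\subset{\mathcal D}_{\varepsilon,\ell}$, the block $d_x'+{\mathcal D}_{\varepsilon,\ell}$ sits inside any ball of radius $3\,\mathrm{diam}(A^\ell T_\varepsilon)$), and then proves that local finiteness holds iff $\varepsilon\in{\mathbb Q}$ --- failure for irrational $\varepsilon$ being witnessed by the adjacent heights $m_1=\frac12(p^k-1)$, $n_1=\frac12(p^k+1)$ whose fibers differ by $\varepsilon p^k$ modulo ${\mathbb Z}$, and the rational case $\varepsilon=a/b$ following from the observation that $b$ times any coordinate difference is an integer. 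Your route instead encodes the tiling by the single sequence $\delta_n=\varepsilon\bigl(M(n+1)-M(n)\bigr)\bmod 1$ (the paper's $\widehat m_1$ is exactly your $M(m_1)$, so the strip decomposition is already implicit in its computation of ${\mathcal D}_{\varepsilon,\infty}$), reduces quasi-periodicity to ``finite alphabet plus uniform recurrence'' for $(\delta_n)$, and supplies as the key new lemma the carry formula $M(n+1)-M(n)=\sigma(n)p^{L(n)}$ together with its Toeplitz consequence. The two proofs use the same arithmetic witnesses --- your $L\bigl((p^l-1)/2\bigr)=l$ is precisely the paper's choice of $m_1$, and your $\tfrac1b{\mathbb Z}/{\mathbb Z}$ observation is its denominator count --- but your version proves strictly more: it identifies $(\delta_n)$ as a Toeplitz sequence and yields explicit recurrence gaps $p^{\ell^*+1}$ for any patch, whereas the paper's local-isomorphism argument, while shorter and free of any digit analysis, gives no such quantitative structure. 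Two small economies you could adopt from the paper: the tiling property of ${\mathcal D}_{\varepsilon,\infty}$ is immediate from Lemma \ref{lem2.1} (it is a set of the form $\{(n+\ell_s,s)^t\}$ with $\ell_s=\varepsilon M(s)$), so the covering argument via $\bigcup_kA^kT_\varepsilon$ is not needed; and the negative-$p$ carry analysis you flag can be bypassed entirely by the substitution $A\mapsto A^2$, ${\mathcal D}_\varepsilon\mapsto{\mathcal D}_\varepsilon+A{\mathcal D}_\varepsilon$ from the start of Section 2, which leaves ${\mathcal D}_{\varepsilon,\infty}$ unchanged.
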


The second one is a class of self-similar sets  $T_\varepsilon$ with a shift $\varepsilon$ on  the diagonal digits along the diagonal line. In contrast with the first one, in this class, $T_\varepsilon$ might not be a tile, as the open set condition will not always hold for any $\varepsilon$ (see the remark at the end of Section 4). Let $\delta_{ij}=1$ if $i=j$; $\delta_{ij}=0$ if $i \ne j$. Then we have

\begin{theorem}\label{mainthm3}
Let $p$ be an integer with $|p|>2$, $\varepsilon\in {\mathbb R}$. Suppose $T_\varepsilon$ is the self-similar set generated by  $A=\left[\begin{array}{cc}
p& 0 \\
0 & p
\end{array}\right]$ and
$${\mathcal D}_\varepsilon=\left\{\left[\begin{array}{c}
i+a_{ij}\\
j+a_{ij}
\end{array}\right]: a_{ij}=\delta_{ij}\varepsilon, \quad
   i, j\in \{0, 1, \dots,|p|-1\}\right\}.$$
Then $T_\varepsilon$ is connected if and only if $|\varepsilon|\leq \frac{(|p|-1)^{2}}{|p|-2}$. (see Figure \ref{fig.3})
\end{theorem}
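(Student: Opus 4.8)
\smallskip
We start from the standard fact (Hata) that $T_\varepsilon$ is connected if and only if the \emph{digit graph} $\mathcal{G}_\varepsilon$ is connected: its vertex set is $\mathcal{D}_\varepsilon$, and $d,d'$ are joined by an edge precisely when $A^{-1}(T_\varepsilon+d)\cap A^{-1}(T_\varepsilon+d')\ne\emptyset$, equivalently when $d-d'\in T_\varepsilon-T_\varepsilon$. Two reductions simplify matters. First, $\mathcal{D}_\varepsilon-\mathcal{D}_\varepsilon$ is symmetric about the origin, so $T_\varepsilon-T_\varepsilon$ (which is the attractor of the IFS $\{x\mapsto A^{-1}(x+v):v\in\mathcal{D}_\varepsilon-\mathcal{D}_\varepsilon\}$) does not change when $p$ is replaced by $-p$; hence $\mathcal{G}_\varepsilon$, and with it the connectedness of $T_\varepsilon$, depends only on $|p|$. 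Second, $\mathcal{D}_{-\varepsilon}=-\mathcal{D}_\varepsilon+(|p|-1,|p|-1)$, so $T_{-\varepsilon}$ is an affine image of $T_\varepsilon$. We may therefore assume $p\ge 3$ and $\varepsilon\ge 0$. The estimate used throughout is
\[
[-1,1]^2\ \subseteq\ T_\varepsilon-T_\varepsilon\ \subseteq\ \Bigl[-\bigl(1+\tfrac{\varepsilon}{|p|-1}\bigr),\ 1+\tfrac{\varepsilon}{|p|-1}\Bigr]^2 .
\]
The right inclusion follows from $\mathcal{D}_\varepsilon\subseteq[0,(|p|-1)+\varepsilon]^2$; the left one follows because $\{-(|p|-1),\dots,|p|-1\}^2\subseteq\mathcal{D}_\varepsilon-\mathcal{D}_\varepsilon$ (a short case-check, with diagonal-minus-diagonal differences supplying the two extreme corners), so $T_\varepsilon-T_\varepsilon$ contains the attractor of the corresponding sub-IFS, namely $[-1,1]^2$. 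Consequently, two off-diagonal digits $d,d'$ with $\|d-d'\|_\infty\le 1$ are adjacent in $\mathcal{G}_\varepsilon$; since the graph on $\{0,\dots,|p|-1\}^2\setminus\{(i,i):0\le i\le|p|-1\}$ with this adjacency is connected for $|p|\ge 3$, all off-diagonal digits lie in a single component, and the diagonal digits $D_i:=(i+\varepsilon,i+\varepsilon)$ form a path because $D_i-D_{i+1}=(-1,-1)$. Hence $\mathcal{G}_\varepsilon$ is connected iff $D_0$ is adjacent to some off-diagonal digit, i.e. iff $(a+\varepsilon,b+\varepsilon)\in T_\varepsilon-T_\varepsilon$ for some $a=i-k$, $b=i-l$ with $i,k,l\in\{0,\dots,|p|-1\}$ and $k\ne l$.

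\smallskip
\emph{Necessity.} Suppose $\varepsilon>\tfrac{(|p|-1)^2}{|p|-2}$. Every such pair $(a,b)$ satisfies $a\ne b$ and $a,b\ge-(|p|-1)$, hence $\max(a,b)\ge-(|p|-2)$; therefore $\max(a+\varepsilon,b+\varepsilon)\ge\varepsilon-(|p|-2)>1+\tfrac{\varepsilon}{|p|-1}$, and $(a+\varepsilon,b+\varepsilon)$ escapes the bounding box of $T_\varepsilon-T_\varepsilon$. So no such bridge exists, $\mathcal{G}_\varepsilon$ is disconnected, and $T_\varepsilon$ is not connected.

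\smallskip
\emph{Sufficiency.} Suppose $0\le\varepsilon\le\tfrac{(|p|-1)^2}{|p|-2}$; it suffices to join $D_0$ to one off-diagonal digit. If $\varepsilon\in[j,j+1]$ with $0\le j\le|p|-2$, take the digit $(j,j+1)$: then $D_0-(j,j+1)=(\varepsilon-j,\varepsilon-j-1)\in[0,1]\times[-1,0]\subseteq[-1,1]^2\subseteq T_\varepsilon-T_\varepsilon$. This disposes of $\varepsilon\in[0,|p|-1]$. For $\varepsilon\in[|p|-1,\tfrac{(|p|-1)^2}{|p|-2}]$, take the digit $(|p|-1,|p|-2)$, so that the bridging vector is $v:=(\varepsilon-(|p|-1),\,\varepsilon-(|p|-2))$. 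Put $\delta:=\varepsilon\tfrac{|p|-2}{|p|-1}-(|p|-1)$, which lies in $[-1,0]$ on this range, and choose $\gamma_m\in\{0,-1,\dots,-(|p|-1)\}$ with $\delta=\sum_{m\ge 1}\gamma_m p^{-m}$. Then
\[
w_m\ :=\ D_{|p|-1+\gamma_m}-(|p|-1,0)\ =\ \bigl(\gamma_m+\varepsilon,\ (|p|-1+\gamma_m)+\varepsilon\bigr)\ \in\ \mathcal{D}_\varepsilon-\mathcal{D}_\varepsilon
\]
(a diagonal digit minus an off-diagonal one), and a direct summation gives $\sum_{m\ge 1}w_m p^{-m}=\bigl(\delta+\tfrac{\varepsilon}{|p|-1},\ 1+\delta+\tfrac{\varepsilon}{|p|-1}\bigr)=v$ by the definition of $\delta$. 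Hence $v\in T_\varepsilon-T_\varepsilon$, so $D_0$ is adjacent to $(|p|-1,|p|-2)$, $\mathcal{G}_\varepsilon$ is connected, and $T_\varepsilon$ is connected.

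\smallskip
The part I expect to carry the real weight is the construction on $[|p|-1,\tfrac{(|p|-1)^2}{|p|-2}]$: there the crude inclusion $[-1,1]^2\subseteq T_\varepsilon-T_\varepsilon$ no longer reaches $v$, so one must exhibit an explicit admissible radix expansion of $v$. The trick above is to let the diagonal index $|p|-1+\gamma_m$ vary with $m$, so that the integer parts of the $w_m$ accumulate to $\delta$ while their $\varepsilon$-parts are carried along unchanged; the condition $\delta\ge-1$ required for the $\gamma_m$ to exist is exactly what produces the threshold $\tfrac{(|p|-1)^2}{|p|-2}$. Checking that each $w_m$ is genuinely a difference of two digits and that both coordinates of $\sum_m w_m p^{-m}$ reach $v$ at once is the only delicate bookkeeping; the rest is elementary.
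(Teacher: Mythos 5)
Your strategy is genuinely different from the paper's (which builds the connectedness from explicit common fixed points of pairs of maps in Proposition 4.1 and then locates intersections of $V_1$ and $V_2$ by comparing specific points with the diagonal segments $\pi,\pi_1,\pi_2$ on the lines $L_3,L_6$), and most of it is correct: the necessity argument via the bounding box $T_\varepsilon-T_\varepsilon\subseteq[-(1+\tfrac{\varepsilon}{q}),1+\tfrac{\varepsilon}{q}]^2$ (with $q:=|p|-1$) is clean, and the varying-index radix expansion of the bridging vector on $[\,q,\tfrac{q^2}{q-1}\,]$ checks out, including at the endpoint. But there is one genuine error: the ``short case-check'' claiming $\{-q,\dots,q\}^2\subseteq\mathcal{D}_\varepsilon-\mathcal{D}_\varepsilon$ is false. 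Besides the corners $\pm(q,q)$, the four points $(q,q-1)$, $(q-1,q)$, $(-q,1-q)$, $(1-q,-q)$ are \emph{not} digit differences for generic $\varepsilon$: for $|p|=3$, first coordinate $2$ from two off-diagonal digits forces $(2,j)-(0,l)$ with $j\in\{0,1\}$, $l\in\{1,2\}$, so the second coordinate lies in $\{-2,-1,0\}$ and never equals $1$; differences involving a diagonal digit have an $\varepsilon$ in one or both coordinates; and diagonal-minus-diagonal differences lie on the line $y=x$. So the sub-IFS whose attractor you claim is $[-1,1]^2$ does not exist, and the inclusion $[-1,1]^2\subseteq T_\varepsilon-T_\varepsilon$ --- which underpins the adjacency of nearby off-diagonal digits, the diagonal path, and the whole bridge for $\varepsilon\in[0,q]$ --- is left unproved.

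The gap is repairable, in two ways. Either prove $[-1,1]^2\subseteq T_\varepsilon-T_\varepsilon$ honestly: every point of $[-1,1]^2$ admits an expansion $\sum_k p^{-k}v_k$ with $v_k\in\{-q,\dots,q\}^2$ avoiding the four missing vectors, because at each step each coordinate has either the forced digit $\pm q$ or two consecutive admissible digits, and one checks that an allowed pair always survives. Or, more economically, note that you only ever use three consequences, each of which follows from vectors that genuinely lie in $\mathcal{D}_\varepsilon-\mathcal{D}_\varepsilon$: (a) $\{-1,0,1\}^2\subseteq T_\varepsilon-T_\varepsilon$ via constant expansions such as $(1,0)=\sum_k p^{-k}(q,0)$ with $(q,0)=(q,1)-(0,1)$; (b) $(-1,-1)=\sum_k p^{-k}(-q,-q)$ with $(-q,-q)=D_0-D_q$; (c) the segment $\{(s,s-1):s\in[0,1]\}\subseteq T_\varepsilon-T_\varepsilon$ (all you need for the bridge when $\varepsilon\in[j,j+1]$) via the digits $(a,a-q)$, $a=0,\dots,q$, e.g.\ $(a,a-q)=(a,a-1)-(0,q-1)$ for $a\ge1$ and $(0,-q)=(1,0)-(1,q)$. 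As submitted, however, the proof of the central inclusion is wrong, so this step must be counted as a real gap rather than an omitted routine verification.
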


For the organization of the paper, we prove Theorem \ref{mainthm} in Section 2, Theorem \ref{mainthm2} in Section 3 and Theorem \ref{mainthm3} in Section 4 respectively.

\section{\bf Self-similar tiles}
Let $(A, {\mathcal D}_\varepsilon)$  be the pair as in Theorem \ref{mainthm}, and let $T_\varepsilon:=T(A, {\mathcal D}_\varepsilon)$ be the associated self-similar set. It suffices to prove the theorem for $p>0$, otherwise we can replace $A$ by $A^2$ according to the fact
$$T_\varepsilon=A^{-1}(T_\varepsilon+{\mathcal D}_\varepsilon)=A^{-2}(T_\varepsilon+{\mathcal D}_\varepsilon+A{\mathcal D}_\varepsilon)$$ where  the digit set ${\mathcal D}_\varepsilon+A{\mathcal D}_\varepsilon$ can be written as:
\begin{eqnarray*}
{\mathcal D}_\varepsilon+A{\mathcal D}_\varepsilon &=& \left\{\left[\begin{array}{c}
pr+l+(pb_k +b_t)\\pk+t\end{array}\right]:r,l,k,t\in\{0,\pm1,\dots,\pm m\}\right\} \\
&=& \left\{\left[\begin{array}{c}i+b'_j\\j\end{array}\right]:i,j\in\{0,\pm1,\dots,\pm(2m^2+2m)\}\right\},
\end{eqnarray*}
where $b'_j=pb_k +b_t\ \text{with}\ j=pk+t$.

We denote by ${\mathcal I}$ the set of ${\mathbf i}=i_1i_2\cdots$ with $i_n\in \{0,\pm1,\dots,\pm m\}$. In view of (\ref{eq1.1}),
\begin{equation}\label{eq2.1}
T_\varepsilon=\left\{ \left[\begin{array}{c}
p({\mathbf i})+b({\mathbf j})\\
p({\mathbf j})
\end{array}\right]: {\mathbf i}=i_1i_2\cdots,{\mathbf j}=j_1j_2\cdots\in {\mathcal I}  \right\}
\end{equation}
where $$p({\mathbf i})=\sum_n \frac{i_n}{p^n},\quad b({\mathbf j})=\sum_n \frac{b_{j_n}}{p^n} \quad \text{and} \quad p({\mathbf j})=\sum_n \frac{j_n}{p^n}.$$

 It follows from the above that the range of the $y$-coordinate of $T_\varepsilon$ is the interval $[-\frac{1}{2},\frac{1}{2}]$. For each fixed $y=p({\mathbf j})$ such that the radix expansion is unique, then the horizontal cross section of $T_\varepsilon$ is an interval of length $1$ with endpoints at $-\frac{1}{2}+b({\mathbf j})$ and $\frac{1}{2}+b({\mathbf j})$; for the other $y$-coordinate that has two radix expansions, the horizontal cross section of $T_\varepsilon$ is the union of two intervals with length $1$.

The following lemma is essentially the same as Proposition 2.2 in \cite{DeLa}.

\begin{Lem}\label{lem2.1}
$T_\varepsilon$ is a self-similar tile. Moreover, for any sequence $\{\ell_s\}_{s\in {\mathbb Z}}$ in ${\mathbb R}$, let ${\mathcal J}=\{(n+\ell_s,s)^t: n,s\in{\mathbb Z}\}$. Then $T_\varepsilon+{\mathcal J}$ is a tiling of ${\mathbb R}^2$.
\end{Lem}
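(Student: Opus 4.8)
The plan is to exploit the explicit coordinate description \eqref{eq2.1}, which already tells us that the $y$-coordinates of $T_\varepsilon$ fill the interval $[-\tfrac12,\tfrac12]$ and that each horizontal cross section is (generically) an interval of length $1$, shifted by $b(\mathbf j)$. The key structural observation is that the shifts $b_{j_n}$ only depend on the parity of the digits $j_n$, so along any fixed horizontal line $y=s+p(\mathbf j)$ (with $s\in\mathbb Z$ and $p(\mathbf j)\in[-\tfrac12,\tfrac12]$) the horizontal cross section of $T_\varepsilon$ is a translate of the $x$-section of $T_0$, namely a single interval $[-\tfrac12,\tfrac12]+b(\mathbf j)$ when the expansion of $p(\mathbf j)$ is unique, and a union of two such unit intervals otherwise. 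In particular the projection of $T_\varepsilon$ onto the $y$-axis is $[-\tfrac12,\tfrac12]$, and $T_\varepsilon$ visibly has nonempty interior (it contains a product of intervals over any $y$-value with unique expansion, interior taken in the strip), so $T_\varepsilon$ is a self-affine/self-similar tile in the sense that it tiles $\mathbb R^2$ by \emph{some} lattice or Delone set — this is exactly the content quoted from Proposition 2.2 of \cite{DeLa}, and I would simply invoke the argument there, adapted to the present $(A,\mathcal D_\varepsilon)$, to get the first sentence of the lemma.

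For the second, more precise statement, I would argue directly that $T_\varepsilon+\mathcal J$ tiles, where $\mathcal J=\{(n+\ell_s,s)^t:n,s\in\mathbb Z\}$. First I would check the covering property: every point $(x,y)\in\mathbb R^2$ lies in some translate. Write $y=s+y_0$ with $s\in\mathbb Z$ and $y_0\in[-\tfrac12,\tfrac12]$; then there is $\mathbf j\in\mathcal I$ with $p(\mathbf j)=y_0$, and the corresponding horizontal cross section of $T_\varepsilon$ at height $y_0$ is a unit interval (or union of two unit intervals) centered appropriately at $b(\mathbf j)$. Because successive integer translates $n\in\mathbb Z$ of a unit interval cover $\mathbb R$, we can choose $n$ so that $x\in n+\ell_s+(\text{that cross section})$, giving $(x,y)\in T_\varepsilon+(n+\ell_s,s)^t$. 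Second, I would check that interiors are pairwise disjoint: two translates $T_\varepsilon+(n+\ell_s,s)^t$ and $T_\varepsilon+(n'+\ell_{s'},s')^t$ with distinct indices either sit over disjoint open $y$-strips (if $s\neq s'$, since the $y$-section has length exactly $1$ and integer vertical spacing leaves only measure-zero overlap), or, when $s=s'$ and $n\neq n'$, they overlap only along the boundary lines $y=\pm\tfrac12$ or along the endpoints of the unit cross-section intervals, again a measure-zero set. Since the cross sections have total length exactly matching the integer spacing, no genuine overlap of interiors occurs.

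The only real subtlety — and the step I expect to be the main obstacle — is handling the exceptional $y$-coordinates, i.e. those $y_0\in(-\tfrac12,\tfrac12)$ admitting two radix expansions $\mathbf j\neq\mathbf j'$ in $\mathcal I$. At such heights the cross section of $T_\varepsilon$ is a union of \emph{two} unit intervals rather than one, so one must verify that after the horizontal translation by $\{n+\ell_s\}$ these still tile the line $\mathbb R\times\{y\}$ without double-covering (on a set of positive length) and without gaps. The clean way is to note that the set of such $y_0$ is countable, hence of measure zero in the $y$-direction, so it cannot affect the tiling property in $\mathbb R^2$; but if one wants the genuine pointwise tiling statement one must check that the two intervals arising from the two expansions, together with the integer translates, exactly tile, which follows because $b(\mathbf j)$ and $b(\mathbf j')$ differ by an integer (the two expansions of $y_0$ differ by carrying, which changes parities of digits but in a way that shifts $b$ by an integer amount). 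I would spell this parity/carry bookkeeping out, then conclude that $T_\varepsilon+\mathcal J$ is a tiling. Finally, since one may take $\ell_s\equiv 0$, the set $\mathcal J=\mathbb Z^2$ works, confirming $T_\varepsilon$ is a lattice tile and in particular a self-similar tile.
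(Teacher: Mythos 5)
Your argument follows essentially the same route as the paper: use the radix description \eqref{eq2.1} to show every $(x,y)^t$ is covered by first choosing $s$ with $y-s=p(\mathbf j)\in[-\frac12,\frac12]$ and then choosing $n$ with $x-b(\mathbf j)-\ell_s-n=p(\mathbf i)$, and then get disjointness from the fact that for almost every point the data $s,\mathbf j,n$ are unique. That is exactly the paper's proof, and your ``clean way'' (the exceptional $y$-coordinates form a countable, hence null, set) is the resolution the paper uses; measure disjointness of the closed translates already forces the open interiors to be disjoint, so nothing more is needed.

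One side claim in your last paragraph is false, however, and you should not try to ``spell it out'': at a height $y_0$ with two expansions $\mathbf j=j_1\cdots j_k\,m\,m\cdots$ and $\mathbf j'=j_1\cdots(j_k+1)(-m)(-m)\cdots$, the shifts satisfy
\[
b(\mathbf j')-b(\mathbf j)=\frac{b_{j_k+1}-b_{j_k}}{p^k}=\pm\frac{\varepsilon}{p^k},
\]
since $b_j$ depends only on the parity of $j$ and $\pm m$ have the same parity. This is not an integer in general (e.g.\ for irrational $\varepsilon$), so the two unit intervals in such a cross section do \emph{not} fit together with the integer translates into an exact pointwise tiling of that horizontal line. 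The pointwise statement you were after simply fails on these exceptional lines, but this does not matter: they have planar measure zero, and the lemma (tiling in the sense of covering plus disjoint interiors) only requires the almost-everywhere argument you and the paper both give.
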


\begin{proof}
Let $D=\{0,\pm1,\dots, \pm m\}$. For any $(x,y)^t\in {\mathbb R}^2$, since $T(p,D)=[-\frac{1}{2},\frac{1}{2}]$, we can find $s\in {\mathbb Z}$ such that $y-s\in [-\frac{1}{2},\frac{1}{2}]$. Let ${\mathbf j}\in {\mathcal I}$ such that $y-s=p({\mathbf j})$.  On the other hand, there is $n\in {\mathbb Z}$ such that $x-b({\mathbf j})-\ell_s-n\in [-\frac{1}{2},\frac{1}{2}]$. This implies that $x-b({\mathbf j})-\ell_s-n=p({\mathbf i})$ for some ${\mathbf i}\in {\mathcal I}$. It follows that $(x,y)^t\in T_\varepsilon+(n+\ell_s,s)^t$. Hence $T_\varepsilon+{\mathcal J}={\mathbb R}^2$.

Note that for almost all $y\in {\mathbb R}$, the above $s, {\mathbf j}$ are unique. If we fix such $y$, then for almost all $x\in {\mathbb R}$, the above $n$ is also unique. Therefore, for almost all $(x,y)^t\in {\mathbb R}^2$, the above $n,s$ are unique. Hence $\{T_\varepsilon+t: t\in{\mathcal J}\}$ are measure disjoint sets. That means
$T_\varepsilon+{\mathcal J}$ tile ${\mathbb R}^2$.
\end{proof}

Geometrically, the tile $T_\varepsilon$ has two sides on the horizontal line $y=-\frac{1}{2}$ and $y=\frac{1}{2}$ with length one. Lemma \ref{lem2.1} implies that the tiling can be moved horizontally. The following is an elementary criterion for connectedness.

\begin{Lem}[\cite{Ha},\cite{KiLa}]\label{lem2.2}
Let $\{S_j\}^N_{j=1}$ be an IFS of contractions on $\mathbb{R}^d$ and let $K$ be its attractor. Then $K$ is connected if and only if, for any $i\neq j\in\{1,2,\dots,N\}$, there exits a sequence $i=j_1,j_2,\dots,j_n=j$ of indices in $\{1,2,\dots,N\}$ so that $S_{j_k}(K)\cap S_{j_{k+1}}(K)\neq\emptyset$ for all $1\leq k<n$.
\end{Lem}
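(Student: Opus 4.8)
The plan is to restate the criterion in graph-theoretic language and prove the two implications separately, the forward one being routine and the reverse one carrying all the weight. I would introduce the \emph{adjacency graph} $G$ on the vertex set $\{1,\dots,N\}$ in which $i$ and $j$ are joined by an edge precisely when $S_i(K)\cap S_j(K)\neq\emptyset$; the condition stated in the lemma is exactly that $G$ is connected. For the implication ``$K$ connected $\Rightarrow G$ connected'' I would argue by contraposition: if $G$ splits into nonempty vertex classes $I_1\sqcup I_2$ with no edges between them, then $U=\bigcup_{i\in I_1}S_i(K)$ and $V=\bigcup_{j\in I_2}S_j(K)$ are nonempty compact sets with $U\cup V=K$, and they are disjoint because any common point would lie in some $S_i(K)\cap S_j(K)$ with $i\in I_1,\ j\in I_2$, which is empty by hypothesis; hence $K$ is disconnected.

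For the reverse implication ``$G$ connected $\Rightarrow K$ connected'', the main device is to pass to higher-level pieces. For a word $w=w_1\cdots w_k$ write $S_w=S_{w_1}\circ\cdots\circ S_{w_k}$; iterating the set equation gives $K=\bigcup_{|w|=k}S_w(K)$, and since each $S_j$ contracts by a common factor $c<1$ in a suitable norm one has $\operatorname{diam}S_w(K)\le c^k\operatorname{diam}K\to0$. Suppose, for contradiction, that $K=P\sqcup Q$ with $P,Q$ nonempty, disjoint and compact, so that $\delta:=\operatorname{dist}(P,Q)>0$. Choosing $k$ so large that $c^k\operatorname{diam}K<\delta$, no level-$k$ piece $S_w(K)$ can meet both $P$ and $Q$, so each piece is contained entirely in $P$ or entirely in $Q$; moreover two pieces of opposite type are disjoint. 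This reduces the whole matter to showing that the level-$k$ adjacency graph $H_k$ — on the words of length $k$, with $w\sim w'$ iff $S_w(K)\cap S_{w'}(K)\neq\emptyset$ — is connected, for then all level-$k$ pieces receive the same label and hence $P$ or $Q$ is empty, a contradiction.

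The crux is therefore the claim that $G$ connected forces $H_k$ connected for every $k$, which I would prove by induction on $k$, the base case $H_1=G$ being the hypothesis. For the inductive step I would decompose each length-$(k{+}1)$ word by its first letter $a$: the block of words with prefix $a$ carries a subgraph of $H_{k+1}$ that is the image under the homeomorphism $S_a$ of $H_k$, hence connected by the inductive hypothesis. To link two blocks $a,a'$ that are adjacent in $G$, pick a point $x\in S_a(K)\cap S_{a'}(K)$; writing $S_a(K)=\bigcup_{|v|=k}S_{av}(K)$ and similarly for $a'$ locates subpieces with $S_{av}(K)\ni x\in S_{a'v'}(K)$, so these two length-$(k{+}1)$ pieces meet and the blocks are joined in $H_{k+1}$. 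Since $G$ is connected, every pair of blocks is chained together in this way, and together with the within-block connectivity this yields connectedness of $H_{k+1}$. I expect this inductive step — in particular the passage from an intersection of level-one pieces to an intersection of refined level-$(k{+}1)$ pieces, together with the bookkeeping that the within-prefix subgraph is an isomorphic copy of $H_k$ — to be the only genuinely delicate point; the remaining contraction and separation estimates are standard.
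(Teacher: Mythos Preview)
Your proof is correct. Note, however, that the paper itself does not prove this lemma --- it is quoted as a known result from \cite{Ha} and \cite{KiLa} and stated without proof, so there is no ``paper's own proof'' to compare against.

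For what it is worth, a common alternative in the cited literature runs slightly differently from your level-$k$ graph argument: one starts from a connected compact set $E_0\supseteq K$ (e.g.\ a large ball), observes that the iterates $E_n=\bigcup_{j}S_j(E_{n-1})$ remain connected because each $S_j(E_{n-1})\supseteq S_j(K)$ and hence the pieces $S_j(E_{n-1})$ inherit at least the overlaps prescribed by $G$, and then uses $E_n\to K$ in the Hausdorff metric together with the fact that Hausdorff limits of connected compacta are connected. Your approach trades this limit step for the inductive bookkeeping on the graphs $H_k$; both routes are elementary and roughly the same length, and your identification of the inductive step (lifting an intersection at level one to an intersection at level $k{+}1$) as the only delicate point is accurate.
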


Let $$f_{i,j}\left(\left[\begin{array}{c}
x\\
y
\end{array}\right]\right)=A^{-1}\left(\left[\begin{array}{c}
x\\
y
\end{array}\right]+\left[\begin{array}{c}
i+b_j\\
j
\end{array}\right]\right),$$ where $ i,j\in\{0,\pm1,\cdots,\pm m\}.$  Then $f_{i,j}$'s form an IFS which generates $T_\varepsilon$. By (\ref{eq2.1}), the elements of $f_{i,j}(T_\varepsilon)$ are of the form
\begin{equation}\label{eq2.2}
\left[\begin{array}{c}
p(i{\mathbf i})+b(j{\mathbf j})\\
p(j{\mathbf j})
\end{array}\right]
\end{equation} where ${\mathbf i}=i_1i_2\cdots,\ {\mathbf j}=j_1j_2\cdots\in {\mathcal I}$. For $j_1j_2\cdots j_n$ with $j_t\in \{0,\pm1,\dots, \pm m\}$,  if we denote
\begin{equation}\label{eq2.3}
G_{j_1\cdots j_n}=\bigcup_{i_1}\cdots\bigcup_{i_n}f_{i_1,j_1}\circ f_{i_2, j_2}\circ\cdots \circ f_{i_n, j_n}(T_\varepsilon),
\end{equation}
then $$T_\varepsilon=\bigcup_{j_1,\dots, j_n}G_{j_1j_2\cdots j_n}.$$

For simplicity of our statements, we write ${\mathbf i}_0=i_1\cdots i_n,\ {\mathbf j}_0=j_1\cdots j_n$ and ${\mathbf 0}=\underbrace{0\cdots 0}_n$ where $n\geq 1$.

\begin{Prop}\label{prop1}
For ${\mathbf j}_0=j_1\cdots j_n\in \{0,\pm1,\dots,\pm m\}^n$ and $k,\ell\in\{0,\pm1,\dots,\pm m\}$,

(i) if $|k-\ell| \geq 2$, then $G_{{\mathbf j}_0 k}\cap G_{{\mathbf j}_0\ell}=\emptyset$;

(ii) if $|k-\ell|=1$, then $G_{{\mathbf j}_0 k}\cap G_{{\mathbf j}_0\ell}$ is a line segment if and only if $$|\varepsilon|<p^{n+1},$$ and is a single point if and only if $|\varepsilon|=p^{n+1}$. (see Figure \ref{fig1})
\end{Prop}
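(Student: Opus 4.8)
\textbf{Proof proposal for Proposition \ref{prop1}.}

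The plan is to make the set-intersection question completely explicit using the radix-expansion description \eqref{eq2.2}. An element of $G_{{\mathbf j}_0 k}$ has the form $(p(\mathbf i)+b(\mathbf j),\, p(\mathbf j))^t$ where $\mathbf j$ ranges over all sequences in $\mathcal I$ with $j_1\cdots j_n j_{n+1} = {\mathbf j}_0 k$ and $\mathbf i$ ranges freely over $\mathcal I$; similarly for $G_{{\mathbf j}_0\ell}$ with the last digit $\ell$ in place of $k$. First I would observe that a point lies in both sets only if its $y$-coordinate $p(\mathbf j)$ admits two radix expansions, one with $(n{+}1)$st digit $k$ and one with $(n{+}1)$st digit $\ell$; since the $j_n$-digits lie in $\{0,\pm1,\dots,\pm m\}$ and $T(p,D)=[-\tfrac12,\tfrac12]$, the ``carry'' analysis of the base-$p$ expansion shows this forces $|k-\ell|=1$, $j_1\cdots j_n$ fixed, and the tail of one expansion to be $\overline{m}:=mmm\cdots$ while the tail of the other is $\overline{-m}$ (this is exactly the standard boundary-of-interval phenomenon already noted after \eqref{eq2.1}: for the $y$'s with two expansions the horizontal cross-section is a union of two unit intervals). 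This immediately gives part (i): if $|k-\ell|\ge 2$ the $y$-coordinates cannot coincide, so $G_{{\mathbf j}_0 k}\cap G_{{\mathbf j}_0\ell}=\emptyset$.

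For part (ii), assume $|k-\ell|=1$, say $\ell=k+1$ (the other case is symmetric). On the common $y$-level the two cross-sections inside $G_{{\mathbf j}_0 k}$ and $G_{{\mathbf j}_0 \ell}$ are unit-length horizontal segments, and I would compute the $x$-offset between them. Writing out $b(\mathbf j)$ for the two expansions $j_1\cdots j_n k\,\overline{m}$ and $j_1\cdots j_n \ell\,\overline{-m}$ and using $b_{j}=\frac{1-(-1)^j}{2}\varepsilon$, the difference of the two $b$-values telescopes: the $b$-digits of $\overline{m}$ and $\overline{-m}$ contribute an alternating geometric series in $1/p$, and the single differing digit at position $n+1$ contributes the term $(b_k - b_\ell)/p^{n+1}$. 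Because $\ell = k+1$ exactly one of $b_k,b_\ell$ equals $\varepsilon$ and the other equals $0$, and the tail series (being alternating with ratio $-1/p$) also evaluates to a rational multiple of $\varepsilon$; after simplification the horizontal gap between the two unit segments is $|c\varepsilon|/p^{n+1}$ for an explicit constant $c$ that is, in fact, $1$ after the $\overline{m}$ versus $\overline{-m}$ contributions are combined with the position-$(n{+}1)$ term — i.e. the gap equals $|\varepsilon|/p^{n+1}$. Meanwhile, the horizontal ``span'' of the union of the two cross-sections is governed by the fact that each is a unit interval and they sit at a fixed vertical position; the two pieces $G_{{\mathbf j}_0 k}$ and $G_{{\mathbf j}_0\ell}$ overlap in a segment precisely when the shifted unit intervals still overlap, i.e. when $|\varepsilon|/p^{n+1} < 1$, overlap in a single point when $|\varepsilon|/p^{n+1}=1$, and are disjoint when $|\varepsilon|/p^{n+1}>1$. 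Rearranging gives the stated thresholds $|\varepsilon|<p^{n+1}$, $|\varepsilon|=p^{n+1}$.

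A cleaner way to package the computation, which I would likely use to avoid tracking the alternating tail by hand, is to exploit self-similarity directly: $G_{{\mathbf j}_0 k} = f_{\cdot,j_1}\circ\cdots\circ f_{\cdot,j_n}\bigl(\bigcup_i f_{i,k}(T_\varepsilon)\bigr)$, and the leading maps $f_{\cdot,j_t}$ are affine contractions by $A^{-1}$ (scaling by $1/p$ in each coordinate) composed with the common translations; so $G_{{\mathbf j}_0 k}\cap G_{{\mathbf j}_0\ell}\ne\emptyset$ iff $\bigl(\bigcup_i f_{i,k}(T_\varepsilon)\bigr)\cap\bigl(\bigcup_i f_{i,\ell}(T_\varepsilon)\bigr)\ne\emptyset$ after applying the (invertible on the relevant strip) map $A^{n}$, which reduces the level-$n$ statement to the level-$0$ statement but with $\varepsilon$ replaced by $\varepsilon/p^{n}$ — wait, more precisely it rescales so that the relevant comparison is between $|\varepsilon|$ and $p\cdot p^{n}$. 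Concretely, $A^n G_{{\mathbf j}_0 k}$ is, up to a horizontal translation by an integer-plus-$b$-shift, the single column $C_k := \bigcup_i f_{i,k}(T_\varepsilon)$, whose $y$-range is $[k/p - \tfrac1{2p},\, k/p+\tfrac1{2p}]$ scaled back up. The key remaining point is then to check that on the one overlapping $y$-level ($y$ with a double expansion between digits $k$ and $k+1$) the column $C_k$ contributes the unit interval $[b_k/p - \tfrac12 + (\text{tail}),\, \ldots]$ and $C_{k+1}$ the unit interval shifted by the $b$-difference, and to verify the constant in front of $\varepsilon$ is exactly right.

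The main obstacle I expect is precisely pinning down that constant: one must be careful that (a) the only doubly-represented $y$-levels relevant to adjacent columns are those ending in the tails $\overline{m}$ and $\overline{-m}$, (b) on such a level the two relevant horizontal cross-sections really are full unit intervals (not shorter, because the free coordinate $\mathbf i$ must be able to realize every value in $[-\tfrac12,\tfrac12]$ simultaneously with the constrained $\mathbf j$, which holds here because the $x$- and $y$-coordinates decouple in \eqref{eq2.1} save for the $b(\mathbf j)$ term), and (c) the alternating geometric tail $\sum_{k\ge 1}(-1)^{?}m/p^{k}$ combines with the $b_k - b_\ell$ term to give net horizontal displacement exactly $\varepsilon/p^{n+1}$ rather than some other rational multiple. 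Once the displacement formula is established, the three cases (segment / point / empty) and hence both the ``if'' and ``only if'' directions follow immediately by comparing the displacement with the common unit length $1$, and the disk-likeness and component-count claims of Theorem \ref{mainthm} will follow from this proposition together with Lemma \ref{lem2.2}.
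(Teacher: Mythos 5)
Your proposal is correct and follows essentially the same route as the paper: part (i) via the observation that $G_{{\mathbf j}_0 k}$ occupies a horizontal strip of height $p^{-(n+1)}$ so that non-adjacent $k,\ell$ give disjoint strips, and part (ii) by reducing to the unique doubly-represented $y$-level, where the two cross-sections are unit intervals offset horizontally by exactly $\varepsilon/p^{n+1}$. One small correction: the tails $\overline{m}$ and $\overline{-m}$ give \emph{constant} (not alternating) $b$-digit series, and since $b_m=b_{-m}$ (same parity) their contributions cancel identically, which is precisely why the constant in front of $\varepsilon$ comes out to be $1$ as you assert.
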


\begin{proof}
In view of (\ref{eq2.2}) and (\ref{eq2.3}), we obtain that
\begin{equation}\label{eq2.4}
G_{{\mathbf j}_0 k}=\left\{ \left[\begin{array}{c}
p({\mathbf i})+b({\mathbf j}_0 k{\mathbf j})\\
p({\mathbf j}_0 k{\mathbf j})
\end{array}\right]: {\mathbf i},{\mathbf j}\in {\mathcal I} \right\}
\end{equation}
From the expression of the $y$-coordinate, $G_{{\mathbf j}_0 k}$ is a part of $T_\varepsilon$ between the horizontal lines $y=\sum_{t=1}^{n}\frac{j_{t}}{p^t}+\frac{k}{p^{n+1}}-\frac{1}{2p^{n+1}}$ and
$y=\sum_{t=1}^{n}\frac{j_{t}}{p^t}+\frac{k+1}{p^{n+1}}-\frac{1}{2p^{n+1}}$ . Hence the part (i) follows.

From (\ref{eq2.3}), we have
\begin{eqnarray}\label{equ.translation}
G_{{\mathbf j}_0} &=& \bigcup_{i_1,\dots,i_n}\left(A^{-n}T_\varepsilon+\left[\begin{array}{c}
p({\mathbf i_0})+b({\mathbf j}_0)\\
p({\mathbf j}_0)
\end{array}\right]\right) \nonumber \\
&=&\bigcup_{i_1,\dots,i_n}\left(A^{-n}T_\varepsilon+\left[\begin{array}{c}
p({\mathbf i_0})\\
0
\end{array}\right]\right)+\left[\begin{array}{c}
b({\mathbf j}_0)\\
p({\mathbf j}_0)
\end{array}\right] \nonumber \\
&=& G_{\mathbf 0}+\left[\begin{array}{c}
b({\mathbf j}_0)\\
p({\mathbf j}_0)
\end{array}\right].
\end{eqnarray}
That is, every $G_{{\mathbf j}_0}$ is a translation of $G_{{\mathbf 0}}$. Hence, to prove the part (ii), we only need to show the cases that $G_{{\mathbf 0}0}\cap G_{{\mathbf 0}1}$ and $G_{{\mathbf 0}0}\cap G_{{\mathbf 0}(-1)}$, as other situations are  their translations. Since  $G_{{\mathbf 0}1}$ and  $G_{{\mathbf 0}(-1)}$ are symmetric with respect to $x$-axis,    it suffices to consider $G_{{\mathbf 0}0}\cap G_{{\mathbf 0}1}$. By making use of (\ref{eq2.4}),
\begin{eqnarray*}
&& G_{{\mathbf 0}0}=\left\{ \left[\begin{array}{c}
p({\mathbf i})+b({\mathbf 0}0{\mathbf j})\\
p({\mathbf 0}0{\mathbf j})
\end{array}\right]: {\mathbf i},{\mathbf j}\in {\mathcal I} \right\} \\
&& G_{{\mathbf 0}1}=\left\{ \left[\begin{array}{c}
p({\mathbf i})+b({\mathbf 0}1{\mathbf j})\\
p({\mathbf 0}1{\mathbf j})
\end{array}\right]: {\mathbf i},{\mathbf j}\in {\mathcal I} \right\}
\end{eqnarray*}
From the proof of part (i), we know that the intersection of  $G_{{\mathbf 0}0}\cap G_{{\mathbf 0}1}$ has a unique $y$-coordinate $\frac{1}{2p^{n+1}}$. By the expression of $G_{{\mathbf 0}0}$, all digits in ${\mathbf j}$ are $m$. Since $b_i=0$ if $i$ is even; $b_i=\varepsilon$ if $i$ is odd. The $x$-coordinate of the element with $y$-coordinate of $\frac{1}{2p^{n+1}}$ in $G_{{\mathbf 0}0}$ is as follows:

If $m$ is even, $b_m=0$, then $b({\mathbf 0}0{\mathbf j})=0$. Hence the $x$-coordinate is $x=p({\mathbf i})$. Since $\{p({\mathbf i}): {\mathbf i}\in {\mathcal I}\}=[-\frac{1}{2},\frac{1}{2}]$, the $x$'s form a unit interval
$$I_1=[-\frac{1}{2},\frac{1}{2}].$$

If $m$ is odd, $b_m=\varepsilon$, then $b({\mathbf 0}0{\mathbf j})=\sum_{t=n+2}^\infty\frac{\varepsilon}{p^t}=\frac{\varepsilon}{p^{n+1}(p-1)}$. Hence the $x$-coordinate is $x=p({\mathbf i})+\frac{\varepsilon}{p^{n+1}(p-1)}$. Such $x$'s form a unit interval $$I_1=[\frac{\varepsilon}{p^{n+1}(p-1)}-\frac{1}{2},\frac{\varepsilon}{p^{n+1}(p-1)}+\frac{1}{2}].$$
In both two cases, we denote $I_1=[\alpha, \alpha+1]$.

Similarly, by the expression of $G_{{\mathbf 0}1}$, all digits in ${\mathbf j}$ are $-m$. Then the $x$-coordinate of the element with $y$-coordinate of $\frac{1}{2p^{n+1}}$ in  $G_{{\mathbf 0}1}$  is of the following form:

If $m$ is even, $x'=\frac{\varepsilon}{p^{n+1}}+ p({\mathbf i})$, which determines a unit interval $$I_2=[\frac{\varepsilon}{p^{n+1}}-\frac{1}{2},\frac{\varepsilon}{p^{n+1}}+\frac{1}{2}].$$

If $m$ is odd, $x'=\frac{\varepsilon}{p^{n+1}(p-1)}+\frac{\varepsilon}{p^{n+1}}+ p({\mathbf i})$, which determines a unit interval $$I_2=[\frac{\varepsilon}{p^{n+1}(p-1)}+\frac{\varepsilon}{p^{n+1}}-\frac{1}{2},\frac{\varepsilon}{p^{n+1}(p-1)}+\frac{\varepsilon}{p^{n+1}}+\frac{1}{2}].$$
In both two cases, we denote $I_2=[\beta,\beta+1]$.

It follows that if $G_{{\mathbf 0}0}\cap G_{{\mathbf 0}1}\ne \emptyset$, then the $y$-coordinate of the intersection is  $\frac{1}{2p^{n+1}}$  and the $x$-coordinate of the intersection is $I_1\cap I_2$. Note that  $I_1\cap I_2$ is an empty set when $|\alpha-\beta|=\frac{\varepsilon}{p^{n+1}}>1$; a single point when $|\alpha-\beta|=\frac{\varepsilon}{p^{n+1}}=1$; and an interval when  $|\alpha-\beta|=\frac{\varepsilon}{p^{n+1}}<1$. Therefore the  part (ii) is proved.
\end{proof}

\begin{Prop}\label{prop2}
Let ${\mathbf j}_0=j_1j_2\cdots j_n\in \{0,\pm1,\dots, \pm m\}^n$ for some $n\geq 1$, then $G_{{\mathbf j}_0}$  is a self-similar tile. Moreover, $G_{{\mathbf j}_0}$ is disk-like if and only if $|\varepsilon|<p^{n+1}$.
\end{Prop}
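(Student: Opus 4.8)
The plan is to reduce $G_{{\mathbf j}_0}$ to the zero-word case and then to settle the dichotomy for the resulting ``base'' tiles. By the translation identity $(\ref{equ.translation})$ we have $G_{{\mathbf j}_0}=G_{\mathbf 0}+\bigl(b({\mathbf j}_0),p({\mathbf j}_0)\bigr)^t$ with ${\mathbf 0}$ the word of $n$ zeros, so $G_{{\mathbf j}_0}$ is a rigid translate of $G_{\mathbf 0}$ and it suffices to treat $G_{\mathbf 0}$. Substituting $T_\varepsilon=\bigcup_{{\mathbf j}_0'}G_{{\mathbf j}_0'}$ (over all ${\mathbf j}_0'\in\{0,\pm1,\dots,\pm m\}^n$) back into $(\ref{equ.translation})$ and $(\ref{eq2.3})$ gives
\[
G_{\mathbf 0}=\bigcup_{{\mathbf i}_0,\,{\mathbf j}_0'}\Bigl(A^{-n}G_{\mathbf 0}+A^{-n}\bigl(b({\mathbf j}_0'),p({\mathbf j}_0')\bigr)^t+\bigl(p({\mathbf i}_0),0\bigr)^t\Bigr),
\]
exhibiting $G_{\mathbf 0}$ as the attractor of an IFS of $(2m+1)^{2n}=|\det A^n|$ similarities of ratio $p^{-n}$; thus $G_{\mathbf 0}$ is a self-similar set. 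Moreover the invertible linear map $\Psi(x,y)=(x,p^n y)$ carries $G_{\mathbf 0}$ onto $T_{\varepsilon/p^n}$: by $(\ref{eq2.1})$ the horizontal section of $G_{\mathbf 0}$ at height $p({\mathbf j})/p^n$ is a unit interval centred at $b({\mathbf j})/p^n$, which is exactly the section of $T_{\varepsilon/p^n}$ at height $p({\mathbf j})$, and the countably many exceptional heights agree since they depend only on $p$. Since $T_{\varepsilon/p^n}$ is a tile by Lemma $\ref{lem2.1}$, $G_{\mathbf 0}$ has non-void interior and tiles ${\mathbb R}^2$ by translations (push the tiling of $T_{\varepsilon/p^n}$ forward by $\Psi^{-1}$); hence each $G_{{\mathbf j}_0}$ is a self-similar tile, and $G_{{\mathbf j}_0}$ is disk-like iff $T_\delta$ is, where $\delta=\varepsilon/p^n$. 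As $|\varepsilon|<p^{n+1}$, $|\varepsilon|=p^{n+1}$, $|\varepsilon|>p^{n+1}$ correspond to $|\delta|<p$, $|\delta|=p$, $|\delta|>p$, the topological claim becomes: $T_\delta$ is disk-like if and only if $|\delta|<p$.

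For the ``only if'' direction I would argue directly with $G_{{\mathbf j}_0}=\bigcup_k G_{{\mathbf j}_0 k}$. By Proposition $\ref{prop1}$, $G_{{\mathbf j}_0 k}\cap G_{{\mathbf j}_0\ell}=\emptyset$ when $|k-\ell|\ge2$, it is a single point when $|k-\ell|=1$ and $|\varepsilon|=p^{n+1}$, and (by the same computation, $I_1\cap I_2=\emptyset$ once $|\varepsilon|/p^{n+1}>1$) it is empty when $|k-\ell|=1$ and $|\varepsilon|>p^{n+1}$. Hence for $|\varepsilon|>p^{n+1}$ the sets $G_{{\mathbf j}_0 k}$, $k\in\{0,\pm1,\dots,\pm m\}$, are pairwise disjoint non-empty closed subsets of $G_{{\mathbf j}_0}$, so $G_{{\mathbf j}_0}$ is disconnected, hence not disk-like. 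For $|\varepsilon|=p^{n+1}$, let $q_k$ be the single point of $G_{{\mathbf j}_0 k}\cap G_{{\mathbf j}_0(k+1)}$; since $\bigcup_{\ell\le k}G_{{\mathbf j}_0\ell}$ and $\bigcup_{\ell\ge k+1}G_{{\mathbf j}_0\ell}$ lie in the two closed half-planes bounded by the horizontal line through $q_k$ and meet only in $q_k$, the set $G_{{\mathbf j}_0}\setminus\{q_k\}$ is disconnected, i.e.\ $q_k$ is a cut point; since a closed topological disk has no cut point, $G_{{\mathbf j}_0}$ is again not disk-like.

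For the ``if'' direction ($|\delta|<p$) I work with $T_\delta$. It is connected by Lemma $\ref{lem2.2}$ together with Proposition $\ref{prop1}$: pieces within a row (second digit fixed) differ by horizontal shifts of modulus $<1$ and overlap (horizontal sections of $T_\delta$ have length $\ge1$), and consecutive rows share the non-degenerate segment $G_j\cap G_{j+1}$, which is met by sub-pieces of both rows, so the intersection graph is connected. For disk-likeness, let $R_0$ be the smallest closed axis-parallel rectangle containing $T_\delta$ and $\Phi$ the Hutchinson operator of the IFS generating $T_\delta$; since $\Phi(R_0)\subseteq R_0$ (an easy check), $R_k:=\Phi^k(R_0)$ decreases to $T_\delta$. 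A direct computation (cf.\ $(\ref{eq2.1})$) shows that $R_k$ is a vertical stack of $(2m+1)^k$ axis-parallel rectangles (``bars''), one in each of the $(2m+1)^k$ abutting strips $p({\mathbf j}_0)+[-\tfrac{1}{2p^k},\tfrac{1}{2p^k}]$ ($|{\mathbf j}_0|=k$), each bar of width $\ge1$, and any two bars in abutting strips overlap horizontally by at least $1-|\delta|/p>0$ (their horizontal offset is $|\delta|/p^{c}$ for some $1\le c\le k$). Such a stack is a closed topological disk, by induction on the number of bars---adding the next bar glues a rectangle to a disk along a sub-segment of the disk's top edge. It remains to show the decreasing limit $T_\delta$ is still a disk: here one uses that $T_\delta$ is a connected self-affine tile (hence locally connected, $T_\delta=\overline{T_\delta^{\circ}}$, $\partial T_\delta$ of measure zero), that ${\mathbb R}^2\setminus T_\delta$ is connected because every horizontal section of $T_\delta$ is a single interval (for $|\delta|<p$ the two unit intervals at an exceptional height have centres within $|\delta|/p^k<1$, hence overlap), and that $T_\delta^{\circ}$ is connected (its sections are open intervals of length $\ge1-|\delta|/p$ and consecutive ones overlap), whence the standard characterization of a planar closed disk applies.

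The delicate point---and the main obstacle---is exactly this passage to the limit: a decreasing sequence of closed topological disks may collapse onto a set with a pinch point or a slit. What prevents this here is the \emph{uniform} lower bound $1-|\delta|/p>0$ for the overlap of bars in abutting strips, valid at \emph{every} level $k$: it keeps the polygonal Jordan curves $\partial R_k$ uniformly away from self-tangency and forces their limit to be a simple closed curve bounding $T_\delta$---equivalently, it prevents $T_\delta$ from having a cut point, which can also be seen by locating a hypothetical cut point inside the subdivision $T_\delta=\bigcup f_{i,j}(T_\delta)$ and contradicting the overlap estimate. Making this rigorous---via convergence of the curves $\partial R_k$, or via the neighbour/contact structure of $T_\delta$ in its quasi-periodic tiling, or by invoking the disk-likeness criterion for such strip-structured self-affine tiles---is the technical heart of the proof; one should also double-check the conjugacy $\Psi(G_{\mathbf 0})=T_{\varepsilon/p^n}$ at the exceptional heights, where the cancellation $b_m=b_{-m}$ is used.
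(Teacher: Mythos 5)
Your reduction to $G_{\mathbf 0}$ via \eqref{equ.translation}, the identification of $G_{\mathbf 0}$ with $T_{\varepsilon/p^n}$ up to the vertical rescaling $\Psi(x,y)=(x,p^ny)$ (this is exactly the paper's conjugation by $U$ following \eqref{eq2.5}), and the whole ``only if'' direction are sound: disconnectedness for $|\varepsilon|>p^{n+1}$ and the cut-point argument for $|\varepsilon|=p^{n+1}$ both follow from Proposition \ref{prop1}, and the latter is in substance what the paper does (it phrases the borderline case as disconnectedness of $(G_{\mathbf 0})^\circ$).

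The genuine gap is in the sufficiency direction, and you have located it yourself: you never prove that the decreasing intersection $T_\delta=\bigcap_k R_k$ of your bar-stack disks is a disk. A nested intersection of closed topological disks can collapse onto a set with a pinch point or a slit, and the uniform overlap bound $1-|\delta|/p$ does not by itself force the Jordan curves $\partial R_k$ to converge to a simple closed curve; the sentence about ``invoking the disk-likeness criterion for such strip-structured tiles'' is precisely the statement that needs proving. The paper avoids the limit entirely: it proves that $(G_{\mathbf 0})^\circ$ is \emph{connected} when $|\varepsilon|<p^{n+1}$ --- for each pair of adjacent pieces $f'_{i,j}(G_{\mathbf 0})$, $f'_{i',j'}(G_{\mathbf 0})$ in a zigzag chain it exhibits a point of the intersection that is \emph{interior} to $G_{\mathbf 0}$, checking interiority with the tiling sets ${\mathcal J}'=\{(r+\ell_s,s/p^n)^t\}$ from Lemma \ref{lem2.1}, whose free horizontal shifts $\ell_s$ show the chosen point lies in no translate of $G_{\mathbf 0}$ other than the two at hand --- and then cites the theorem of Luo, Rao and Tan \cite{LRT} that a planar self-affine tile with connected interior is disk-like. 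That citation is the missing ingredient in your write-up: with it, your (correct) observations that horizontal and vertical neighbours overlap in sets containing interior points finish the proof, and the bar-stack limit becomes unnecessary. Note also that connectedness of the horizontal sections of $T_\delta^{\circ}$ is not by itself a proof that $T_\delta^{\circ}$ is connected (the interior of a union is not the union of the interiors of its slices), so that step too requires the tiling-based interior-point argument rather than the section-by-section one you sketch.
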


\begin{proof}
Since $G_{{\mathbf j}_0}$ is a translation of $G_{{\mathbf 0}}$ by \eqref{equ.translation}, we only show the case for $G_{{\mathbf 0}}$.
By (\ref{eq2.3}), we see that
\begin{eqnarray*}
G_{{\mathbf 0}j}&=& \bigcup_{i_1,\dots, i_{n+1}}\left(A^{-n-1}T_\varepsilon+\left[\begin{array}{c}
p(i_1\cdots i_{n+1})+b({\mathbf 0}j)\\
p({\mathbf 0}j)
\end{array}\right]\right) \\
&=& \bigcup_{i_1,\dots, i_{n+1}}A^{-1}\left(A^{-{n}}T_\varepsilon+ A\left[\begin{array}{c}
p(i_1\cdots i_{n+1})+b({\mathbf 0}j)\\
p({\mathbf 0}j)
\end{array}\right]\right) \\
&=& \bigcup_{i_1,\dots, i_{n+1}}A^{-1}\left(A^{-{n}}T_\varepsilon+\left[\begin{array}{c}
p(i_2\cdots i_{n+1})+i_1+\frac{b_j}{p^n}\\
\frac{j}{p^n}
\end{array}\right]\right) \\
&=& \bigcup_{i_1}A^{-1}\left(\bigcup_{i_2,\dots,i_{n+1}}\left(A^{-{n}}T_\varepsilon+\left[\begin{array}{c}
p(i_2\cdots i_{n+1})\\
0
\end{array}\right]\right)+\left[\begin{array}{c}
i_1+\frac{b_j}{p^n}\\
\frac{j}{p^n}
\end{array}\right]\right) \\
&=& \bigcup_{i_1}A^{-1}\left(G_{\mathbf 0}+\left[\begin{array}{c}
i_1+\frac{b_j}{p^n}\\
\frac{j}{p^n}
\end{array}\right]\right)
\end{eqnarray*}
From $G_{\mathbf 0}=\bigcup_{j=-m}^{m}G_{{\mathbf 0}j}$, it follows that
\begin{equation}\label{eq2.5}
G_{\mathbf 0}=A^{-1}(G_{\mathbf 0}+{\mathcal D}')
\end{equation}
where
$${\mathcal D}'=\left\{\left[\begin{array}{c}i+\frac{b_j}{p^n}\\ \frac{j}{p^n}\end{array}\right]: i,j\in\{0,\pm1,\dots,\pm m\}\right\}.$$
Let  $U=\left[\begin{array}{cc}
1& 0 \\
0 & p^n
\end{array}\right]$. Then $UG_{\mathbf 0}=A^{-1}(UG_{\mathbf 0}+U{\mathcal D}')$, where $$U{\mathcal D}'=\left\{\left[\begin{array}{c}i+\frac{b_j}{p^n}\\j\end{array}\right]: i,j\in\{0,\pm1,\dots,\pm m\}\right\}.$$ 
By Lemma \ref{lem2.1}, $UG_{\mathbf 0}$ is a self-similar tile, so is $G_{\mathbf 0}$. Moreover,  ${\mathcal J}'=\{(r+\ell_s,\frac{s}{p^n})^t: r,s\in\mathbb{Z}\}$ is a tiling set of $G_{\mathbf 0}$ for any sequence $\{\ell_s\}_{s\in\mathbb{Z}}\subset\mathbb{R}$.

According to \eqref{eq2.5}, we can write the corresponding IFS of $G_{\mathbf 0}$ as 
$$f_{i,j}'\left(\left[\begin{array}{c}x\\y\end{array}\right]\right)=A^{-1}\left(\left[\begin{array}{c}x\\y\end{array}\right]+\left[\begin{array}{c}i+\frac{b_j}{p^n}\\ \frac{j}{p^n}\end{array}\right]\right),\quad i,j\in\{0,\pm1,\dots,\pm m\}.$$
We will verify the disk-likeness of $G_{\mathbf 0}$ through the following claims.

(i) We claim that $f'_{i,j}(G_{\mathbf 0})\cap f'_{i+1,j}(G_{\mathbf 0})$ contains an interior point of $G_{\mathbf 0}$ for any $i,j$. It suffices to show that $G_{\mathbf 0}\cap (G_{\mathbf 0}+(1,0)^t)$ contains a point of $(G_{\mathbf 0}\cup (G_{\mathbf 0}+(1,0)^t))^\circ$. For ${\mathbf i}, {\mathbf j}\in {\mathcal I}$, we have $p(\mathbf i)\in [-\frac{1}{2},\frac{1}{2}]$ and $\frac{p(\mathbf j)}{p^n}\in [-\frac{1}{2p^n},\frac{1}{2p^n}]$. Fix a point $y_0\in(-\frac{1}{2p^n},\frac{1}{2p^n})$ so that there exists a unique $\mathbf j\in {\mathcal I}$ such that $\frac{p(\mathbf j)}{p^n}=y_0$. Let $$x_0=\frac{b(\mathbf j)}{p^n}+\frac{1}{2},$$ then $(x_0,y_0)^t\in G_{\mathbf 0}\cap (G_{\mathbf 0}+(1,0)^t)$.

On the other hand, since $y_0$ is an interior point of $[-\frac{1}{2p^n},\frac{1}{2p^n}]$ and the set ${\mathcal J}'=\{(r,\frac{s}{p^n})^t: r,s\in\mathbb{Z}\}$ is a tiling set of $G_{\mathbf 0}$ (taking $\ell_s\equiv0$), then $(x_0,y_0)^t\in G_{\mathbf 0}+(r,\frac{s}{p^n})^t$ only if $s=0$,  i.e., $(x_0,y_0)^t\in G_{\mathbf 0}+(r,0)^t$. Thus, $x_0$ must be the form
$$x_0=\frac{b(\mathbf j)}{p^n}+p(\mathbf i)+r$$
for some $\mathbf i\in {\mathcal I}$.  That implies $p(\mathbf i)+r=\frac{1}{2}$, hence $r=0\ \text{or}\ 1$ as $p(\mathbf i)\in [-\frac{1}{2},\frac{1}{2}]$. Then $(x_0,y_0)^t\notin G_{\mathbf 0}+(r,0)^t$ for any $r\in {\mathbb Z}\setminus\{0,1\}$. Therefore $(x_0,y_0)^t$ must be in $(G_{\mathbf 0}\cup (G_{\mathbf 0}+(1,0)^t))^\circ$.

(ii) We claim that $G_{\mathbf 0}$ is connected if and only if $|\varepsilon|\leq p^{n+1}$. Indeed, if $G_{\mathbf 0}$ is connected,  by Proposition \ref{prop1},  then $G_{{\mathbf 0}k}\cap G_{{\mathbf 0} (k+1)}\neq\emptyset$ for  $-m\leq k\leq m-1$, which implies $|\varepsilon|\leq p^{n+1}$. On the contrary, if $|\varepsilon|\leq p^{n+1}$, then $G_{\mathbf 0k}\cap G_{\mathbf 0 (k+1)}\neq\emptyset$ for $-m\leq k\leq m-1$,  by Proposition \ref{prop1}. Thus, there exist $i_k,t_k\in\{0,\pm 1,\dots,\pm m\}$ such that $f'_{i_k,k}(G_{\mathbf 0})\cap f'_{t_k,k+1}(G_{\mathbf 0})\neq\emptyset$. That together with (i), can help us select a finite sequence $\{S_j\}_{j=1}^N$ from $\{f'_{i,j}\}_{i,j}$  in the following zigzag order:
\begin{eqnarray*}
& & f'_{-m,-m},f'_{-m+1,-m},\dots,f'_{m,-m},f'_{m-1,-m},\dots,f'_{i_{-m},-m},f'_{t_{-m},-m+1},f'_{t_{-m}+1,-m+1},\dots,\\
& & f'_{m,-m+1},f'_{m-1,-m+1},\dots,f'_{-m,-m+1},f'_{-m+1,-m+1},\dots,f'_{i_{-m+1},-m+1},f'_{t_{-m+1},-m+2},\dots,\\
& & f'_{i_{m-1},m-1},f'_{t_{m-1},m},\dots,f'_{m,m},f'_{m-1,m},\dots,f'_{-m,m}.
\end{eqnarray*}
Then each $f'_{i,j}$ appears at least once in the sequence  $\{S_j\}_{j=1}^N$ and $$S_j(G_{\mathbf 0})\cap S_{j+1}(G_{\mathbf 0})\ne\emptyset, \quad  \forall \ 1\le j\le N-1.$$ 
Hence $G_{\mathbf 0}$ is connected by Lemma \ref{lem2.2}.

\begin{figure}
  \centering
   \subfigure[$\varepsilon=2$]{
  \includegraphics[width=4cm, height=3cm]{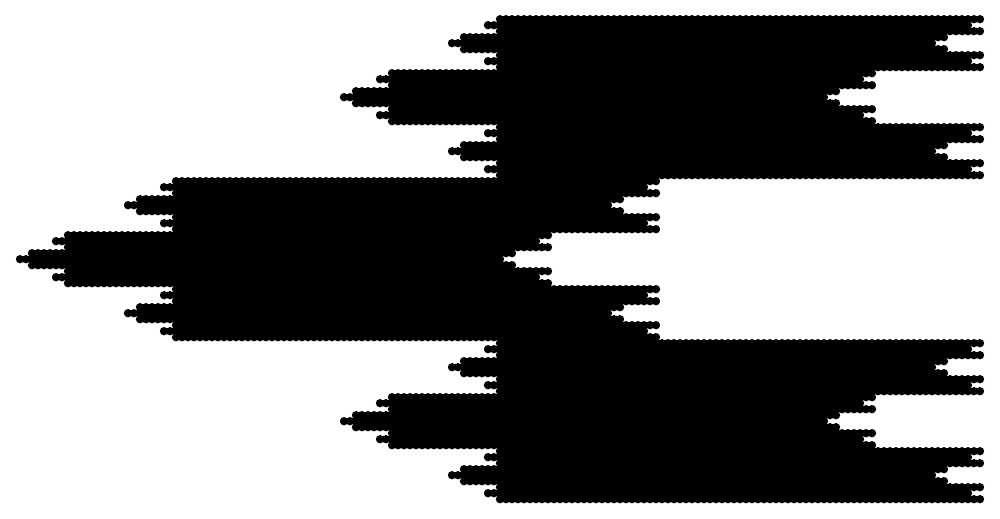}
 } \quad
 \subfigure[$\varepsilon=3$]{
  \includegraphics[width=4cm, height=3cm]{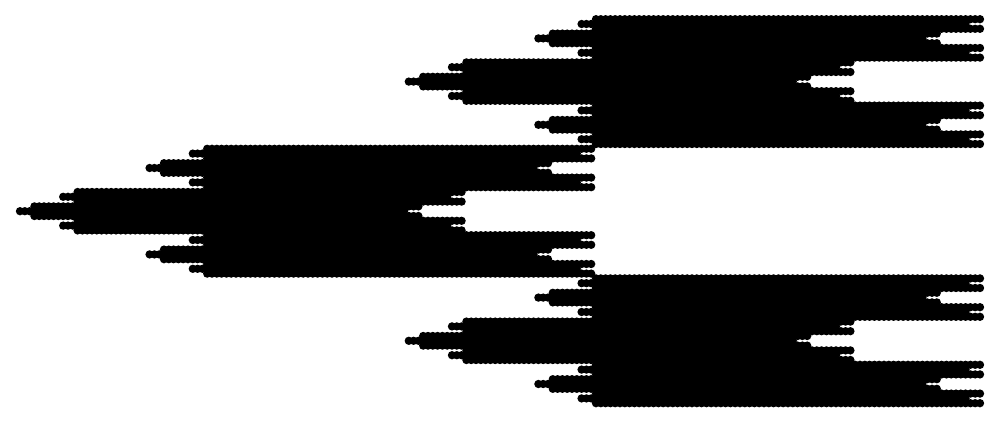}
 } \quad
 \subfigure[$\varepsilon=4$]{
   \includegraphics[width=4cm, height=3cm]{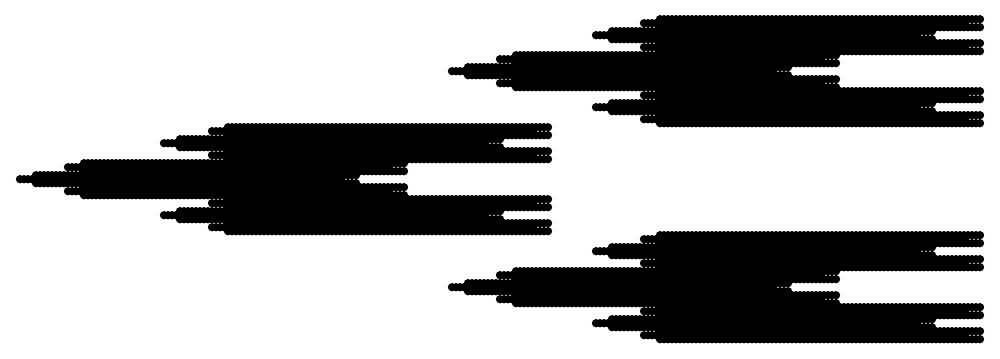}
 }\\
 \subfigure[$\varepsilon=8$]{
  \includegraphics[width=4cm, height=3cm]{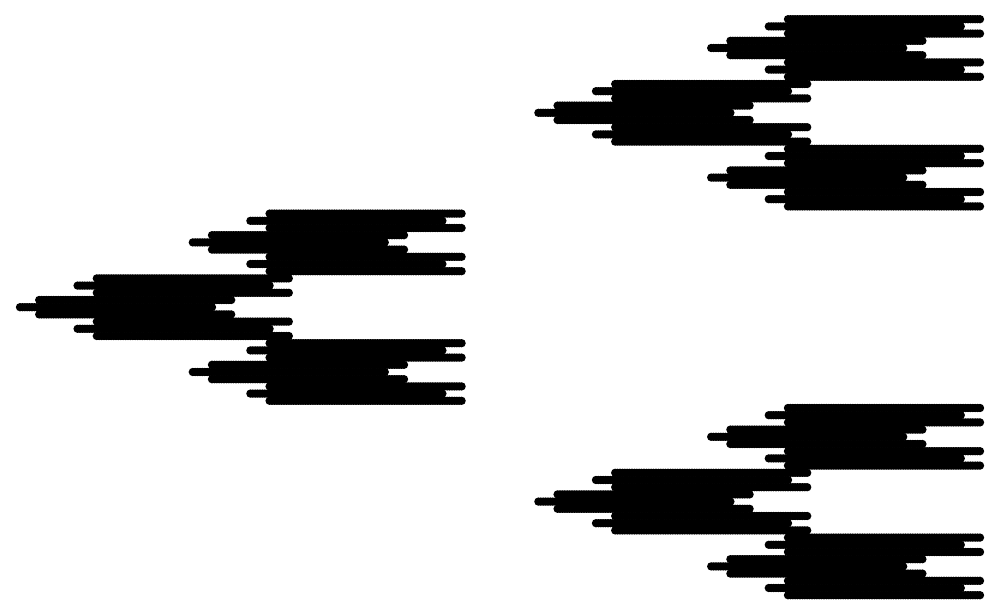}
 } \quad
 \subfigure[$\varepsilon=9$]{
   \includegraphics[width=4cm, height=3cm]{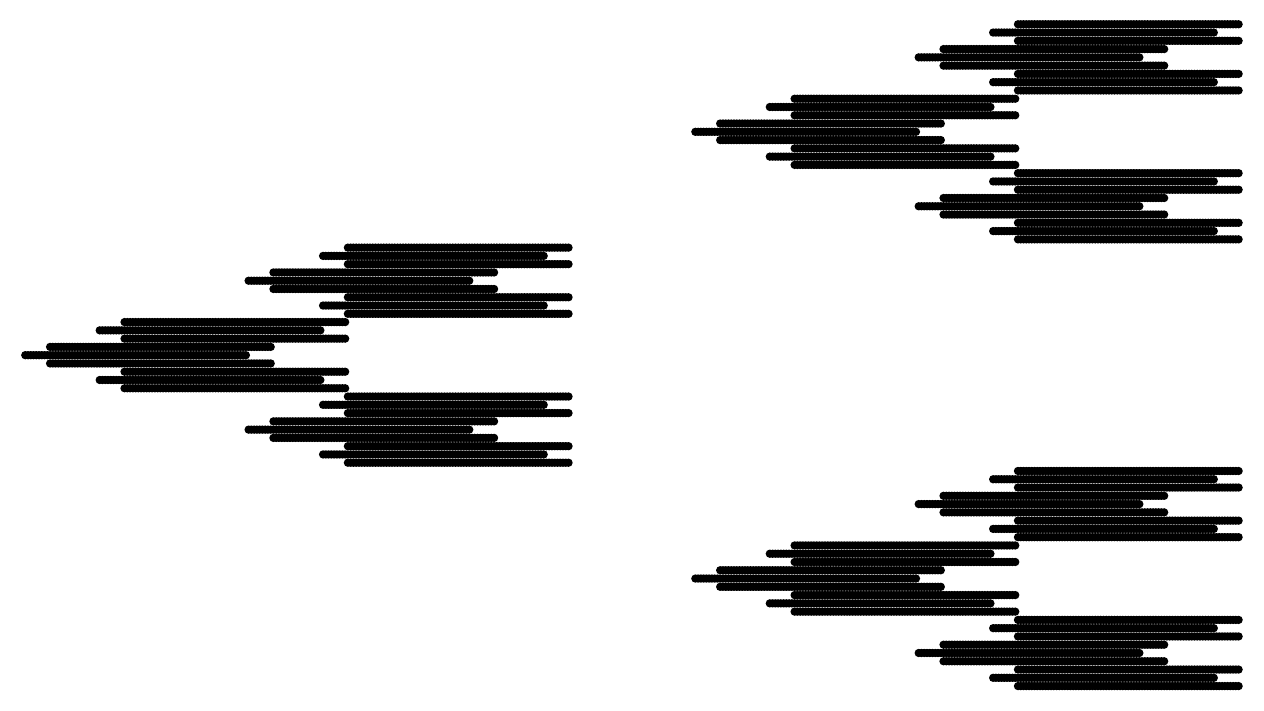}
 }\quad
 \subfigure[$\varepsilon=10$]{
   \includegraphics[width=4cm, height=3cm]{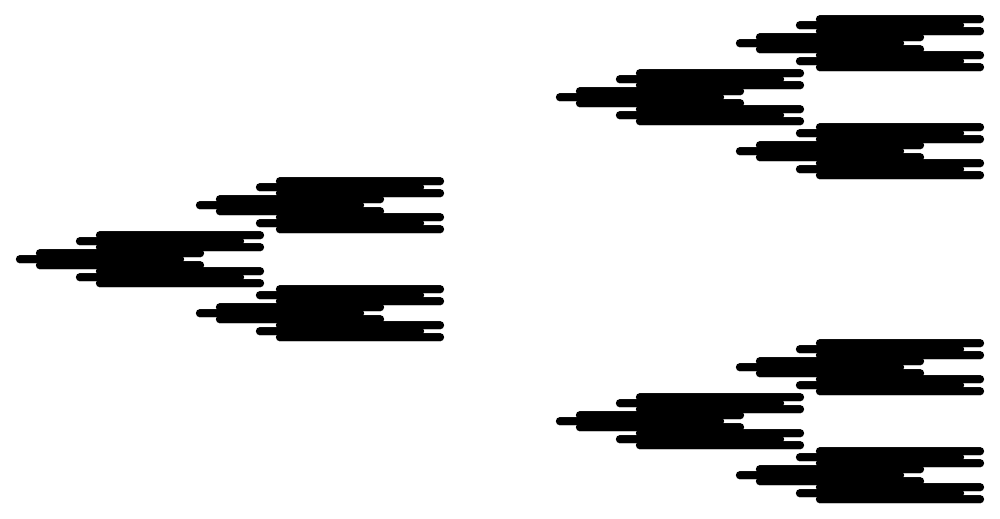}
 }
 \caption{An illustration of Theorem \ref{mainthm} by taking $p=3$.}\label{fig1}
\end{figure}

(iii) We claim that $(G_{\mathbf 0})^\circ$ is connected if $|\varepsilon|< p^{n+1}$. For $-m\leq k\leq m-1$, if $|\varepsilon|< p^{n+1}$, then
there exist $i_k,t_k$ such that $f'_{i_k,k}(G_{\mathbf 0})\cap f'_{t_k,k+1}(G_{\mathbf 0})$ is a horizontal line segment by Proposition \ref{prop1}. Suppose $\overline{z}=f'_{i_k,k}(\tilde{z})$ be the mid-point of the line segment. Since
$$f'_{i_k,k}(G_{\mathbf 0})=A^{-1}\left(G_{\mathbf 0}+\left[\begin{array}{c}
i_k+\frac{b_k}{p^n}\\
\frac{k}{p^n}
\end{array}\right]\right),$$
$$f'_{t_k,k+1}(G_{\mathbf 0})=A^{-1}\left(G_{\mathbf 0}+\left[\begin{array}{c}
i_k+\frac{b_k}{p^n}\\
\frac{k}{p^n}
\end{array}\right]+\left[\begin{array}{c}
c\\
\frac{1}{p^n}
\end{array}\right]\right),$$
where $c=t_k-i_k+\frac{b_{k+1}-b_k}{p^n}$. Then $G_{\mathbf 0}\cap (G_{\mathbf 0}+(c,\frac{1}{p^n})^t)$ is a horizontal line segment with positive length and $\tilde{z}$ is its mid-point. Note that the top and bottom sides of $G_{\mathbf 0}$ are horizontal line segments with length one and the height of $G_{\mathbf 0}$ is $\frac{1}{p^n}$, it is easy to verify that
$$\tilde{z}\in G_{\mathbf 0}+(r+\ell_s,\frac{s}{p^n})^t \ \  \text{if and only if } \ (r+\ell_s,\frac{s}{p^n})^t=(0,0)^t \ \text{or} \ (c,\frac{1}{p^n})^t.$$ Then $\tilde{z}$ is an interior point of $G_{\mathbf 0}\cup (G_{\mathbf 0}+(c,\frac{1}{p^n})^t)$, this means $\overline{z}$ is an interior point of $f'_{i_k,k}(G_{\mathbf 0})\cup f'_{t_k,k+1}(G_{\mathbf 0})$. Thus, $f'_{i_k,k}(G_{\mathbf 0})\cap f'_{t_k,k+1}(G_{\mathbf 0})$ contains an interior point of $G_{\mathbf 0}$. Hence $(G_{\mathbf 0})^\circ$ is connected by Lemma \ref{lem2.2}.

Now we prove the second part of the proposition.  Suppose $|\varepsilon|<p^{n+1}$, then claim (iii) implies $(G_{\mathbf 0})^\circ$ is connected, which yields the disk-likeness of $G_{\mathbf 0}$ by a theorem of Luo {\it et al.} \cite{LRT}. Suppose $G_{\mathbf 0}$ is disk-like, then $G_{\mathbf 0}$ and $(G_{\mathbf 0})^\circ$ are both connected, hence we have  $|\varepsilon|\leq p^{n+1}$ by claim (ii). If the equality holds, then $G_{\mathbf 0k}\cap G_{\mathbf 0(k+1)}$ contains only one point for any $k$, and  $G_{\mathbf 0k}\cap G_{\mathbf 0\ell}=\emptyset$ for $|k-\ell|\geq 2$ by Proposition \ref{prop1}. Since $G_{\mathbf 0}=\bigcup_{k=-m}^{m}G_{{\mathbf 0}k}$, $G_{\mathbf 0}$ can be divided into $p$ parts, and every two adjacent parts intersect at one common point. Hence the intersection point must be at the boundary of $G_{\mathbf 0}$. That is,  $(G_{\mathbf 0})^\circ$ is not connected, yielding a contradiction.  Therefore, $G_{\mathbf 0}$ is disk-like if and only if $|\varepsilon|<p^{n+1}$.
\end{proof}

\bigskip

\noindent{\it \textbf{Proof of Theorem \ref{mainthm}:}}
(i) If $|\varepsilon|<|p|$, by using the same argument as in the proof of Proposition \ref{prop2}, we can prove that $T_\varepsilon$ is disk-like (see also Theorem 3.1 of \cite{DeLa}); If $|\varepsilon|<|p|^{n+1}$ for $n \geq 1$, by Proposition \ref{prop2}, then $G_{j_1\cdots j_n}$ are disk-like tiles. Moreover, from Proposition \ref{prop1} and the assumption $|\varepsilon|\ge |p|^n$, it follows that the tiles $G_{j_1\cdots j_n}$ are either disjoint or meet each other at a single point (see Figure \ref{fig1}). Therefore, we conclude with proving (ii).
\hfill $\Box$

\section{\bf Quasi-periodic tiling}
A tiling $T+{\mathcal J}$ of ${\mathbb R}^n$ with a tile $T$  is called a {\it self-replicating tiling} with a matrix $B$ if for each $t\in {\mathcal J}$ there exists a finite set $J(t)\subset {\mathcal J}$ such that $$B(T+t)=\bigcup_{t'\in J(t)}(T+t').$$
Moreover, we say a tiling $T+{\mathcal J}$ is a {\it quasi-periodic tiling} if the following two properties hold:

\begin{enumerate}
\item  Local isomorphism property: For any finite set $\Sigma\subset {\mathcal J}$, there exists a constant $R>0$ such that every ball of radius $R$ in ${\mathbb R}^n$ contains a translating copy $\Sigma+t$ of $\Sigma$ such that $\Sigma+t\subset {\mathcal J}$.

\item Local finiteness property: For any $k\ge 1$ and $r>0$, there  are finitely many translation-inequivalent arrangements of $k$ points in $\mathcal J$ which lie in some ball of radius $r$.
\end{enumerate}

Under the assumption of Theorem \ref{mainthm}, we let ${\mathcal D}_{\varepsilon,k}={\mathcal D}_\varepsilon+ A{\mathcal D}_\varepsilon+\cdots+ A^{k-1}{\mathcal D}_\varepsilon$ and ${\mathcal D}_{\varepsilon,\infty}=\bigcup_{k=1}^\infty{\mathcal D}_{\varepsilon,k}$. For $j\in \{0,\pm1,\dots, \pm m\}$, denote by
\begin{equation*}
\widehat{j}=\left\{
\begin{array}{ll}
1 & j \ \text{is odd} \\
0 &  j \ \text{is even}.
\end{array} \right.
\end{equation*}

Then it can be easily seen that
\begin{align*}
{\mathcal D}_{\varepsilon,\infty} &= \left\{\left[\begin{array}{c}
\sum_{n=0}^\infty i_n p^n+ \varepsilon\sum_{n=0}^\infty \widehat{j}_n p^n\\
\sum_{n=0}^\infty j_n p^n
\end{array}\right]: i_n, j_n\in\{0,\pm1,\dots,\pm m\}\right\} \\
&=  \left\{\left[\begin{array}{c}
m_2+\widehat{m}_1\varepsilon\\
m_1
\end{array}\right]: m_1,m_2\in{\mathbb Z}, \widehat{m}_1=\sum_{n=0}^\infty \widehat{j}_n p^n \text{ when } m_1=\sum_{n=0}^\infty j_n p^n\right\}.
\end{align*}

\begin{Prop}
 $T_\varepsilon+{\mathcal D}_{\varepsilon,\infty}$ is a self-replicating tiling.
\end{Prop}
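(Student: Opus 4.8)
The plan is to verify directly that the tiling $T_\varepsilon+{\mathcal D}_{\varepsilon,\infty}$ satisfies the defining identity of a self-replicating tiling with respect to the expanding matrix $B=A$. First I would record the explicit description of ${\mathcal D}_{\varepsilon,\infty}$ computed just above the statement, namely that its elements are exactly the vectors $(m_2+\widehat m_1\varepsilon,\, m_1)^t$ where $m_1$ ranges over ${\mathbb Z}$, $m_2$ ranges over ${\mathbb Z}$, and $\widehat m_1$ is the integer obtained from a base-$p$ expansion $m_1=\sum j_n p^n$ (with digits in $\{0,\pm1,\dots,\pm m\}$) by replacing each digit $j_n$ with its parity bit $\widehat j_n$. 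The one genuine subtlety here is that $\widehat m_1$ is a priori a \emph{multivalued} function of $m_1$, since a given integer may have several admissible digit expansions; I would note that ${\mathcal D}_{\varepsilon,\infty}$ is the union over all such choices, so as a set it is perfectly well defined, and proceed.

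Next I would use the self-similarity $T_\varepsilon=A^{-1}(T_\varepsilon+{\mathcal D}_\varepsilon)$, equivalently $A\,T_\varepsilon=T_\varepsilon+{\mathcal D}_\varepsilon$. For a fixed $t\in{\mathcal D}_{\varepsilon,\infty}$ this gives
$$
A(T_\varepsilon+t)=A\,T_\varepsilon+At=\bigcup_{d\in{\mathcal D}_\varepsilon}\bigl(T_\varepsilon+d+At\bigr),
$$
so the candidate finite set is $J(t)=\{\,At+d:\ d\in{\mathcal D}_\varepsilon\,\}$, and the only thing left to check is the containment $J(t)\subset{\mathcal D}_{\varepsilon,\infty}$. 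This is where the structure of ${\mathcal D}_{\varepsilon,\infty}$ must be used: writing $t=(m_2+\widehat m_1\varepsilon,\,m_1)^t$ with $m_1=\sum_{n\ge0}j_n p^n$ and $\widehat m_1=\sum_{n\ge0}\widehat j_n p^n$, and a digit $d=(i+b_j,\,j)^t=(i+\widehat j\,\varepsilon,\,j)^t$ of ${\mathcal D}_\varepsilon$ (recall $b_j=\frac{1-(-1)^j}{2}\varepsilon=\widehat j\,\varepsilon$), one computes
$$
At+d=\left[\begin{array}{c} p(m_2+\widehat m_1\varepsilon)+i+\widehat j\,\varepsilon\\ p m_1+j\end{array}\right]
=\left[\begin{array}{c} (pm_2+i)+(p\widehat m_1+\widehat j)\varepsilon\\ pm_1+j\end{array}\right].
$$
The new $y$-coordinate is $pm_1+j=\sum_{n\ge0}j'_n p^n$ with $j'_0=j$ and $j'_{n}=j_{n-1}$ for $n\ge1$, i.e.\ the digit string of $m_1$ shifted up and with $j$ prepended; its parity-bit integer is precisely $\widehat j+p\widehat m_1$, which is exactly the coefficient of $\varepsilon$ appearing above. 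Since $pm_2+i\in{\mathbb Z}$, this shows $At+d$ is again of the required form, hence $At+d\in{\mathcal D}_{\varepsilon,\infty}$. Therefore $J(t)\subset{\mathcal D}_{\varepsilon,\infty}$ and $A(T_\varepsilon+t)=\bigcup_{t'\in J(t)}(T_\varepsilon+t')$, which is the self-replicating property. That $T_\varepsilon+{\mathcal D}_{\varepsilon,\infty}$ is genuinely a tiling follows from Lemma~\ref{lem2.1} applied to the appropriate shift sequence $\{\ell_s\}$, or can be re-derived from the radix description.

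The main obstacle, such as it is, is purely bookkeeping: one must make sure the prepended-digit expansion of $pm_1+j$ is \emph{admissible} (digits in $\{0,\pm1,\dots,\pm m\}$) so that its parity-bit integer $p\widehat m_1+\widehat j$ legitimately witnesses membership in ${\mathcal D}_{\varepsilon,\infty}$ — this is immediate because $j\in\{0,\pm1,\dots,\pm m\}$ and the remaining digits are those of $m_1$, which were admissible by hypothesis. A secondary point worth a sentence is that the map $t\mapsto \widehat m_1$ need not be single-valued, but since $J(t)$ is defined via the \emph{set} ${\mathcal D}_\varepsilon$ and ${\mathcal D}_{\varepsilon,\infty}$ is closed under the prepend operation on \emph{every} admissible expansion, no inconsistency arises. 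No deep ingredient beyond the self-similarity relation and the explicit form of ${\mathcal D}_{\varepsilon,\infty}$ is needed.
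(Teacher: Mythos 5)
Your proposal is correct and follows essentially the same route as the paper: take $J(t)=At+{\mathcal D}_\varepsilon$ and apply the set equation $AT_\varepsilon=T_\varepsilon+{\mathcal D}_\varepsilon$, with the tiling property coming from Lemma \ref{lem2.1}. The only difference is that you verify $J(t)\subset{\mathcal D}_{\varepsilon,\infty}$ by the parity-bit digit bookkeeping, which is fine but can be shortened to the one-line observation that $A{\mathcal D}_{\varepsilon,k}+{\mathcal D}_\varepsilon={\mathcal D}_{\varepsilon,k+1}$ (and note that the multivaluedness worry is vacuous here, since the balanced base-$p$ expansion of an integer is unique).
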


\begin{proof}
By Lemma \ref{lem2.1}, obviously ${\mathcal D}_{\varepsilon,\infty}$ is a tiling set of ${\mathbb R}^2$. For any $t\in {\mathcal D}_{\varepsilon,\infty}$, we let $J(t)=At+{\mathcal D}_\varepsilon$. Since $AT_\varepsilon=T_\varepsilon+{\mathcal D}_\varepsilon$, we have $AT_\varepsilon+At=T_\varepsilon+{\mathcal D}_\varepsilon+At$. Hence $$A(T_\varepsilon+t)=\bigcup_{t'\in J(t)}(T_\varepsilon+t').$$ Therefore $T_\varepsilon+{\mathcal D}_{\varepsilon,\infty}$ is a self-replicating tiling by letting $B=A$.
\end{proof}

\begin{theorem}
$T_\varepsilon+{\mathcal D}_{\varepsilon,\infty}$ is a quasi-periodic tiling if and only if $\varepsilon$ is a rational number.
\end{theorem}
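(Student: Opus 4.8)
The plan is to separate the two requirements in the definition of a quasi‑periodic tiling and to show that the \emph{local isomorphism property always holds} for $T_\varepsilon+{\mathcal D}_{\varepsilon,\infty}$, while the \emph{local finiteness property holds precisely when $\varepsilon\in{\mathbb Q}$}; together these give the theorem. I will work throughout with the explicit description ${\mathcal D}_{\varepsilon,\infty}=\{(m_2+\widehat m_1\varepsilon,m_1)^t:m_1,m_2\in{\mathbb Z}\}$ recorded above (we may take $p>0$ as in Section 2). The basic observation is that, since every $y$‑coordinate occurring in ${\mathcal D}_{\varepsilon,\infty}$ is an integer, for a finite set $\Sigma=\{(c_i+\widehat{n_i}\varepsilon,n_i)^t\}_{i=1}^k\subset{\mathcal D}_{\varepsilon,\infty}$ the translate $\Sigma+(u,v)^t$ lies in ${\mathcal D}_{\varepsilon,\infty}$ if and only if $v\in{\mathbb Z}$ and $u\equiv(\widehat{n_i+v}-\widehat{n_i})\varepsilon\pmod 1$ for all $i$. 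Thus everything reduces to understanding the integer‑valued map $n\mapsto\widehat n$ coming from balanced base‑$p$ expansions.

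The combinatorial core consists of two facts about $\widehat{\,\cdot\,}$. First, if $|n|\le(p^M-1)/2$ then the expansion of $n$ occupies only positions $0,\dots,M-1$, so adding $p^M\ell$ creates no carries and $\widehat{n+p^M\ell}=\widehat n+p^M\widehat\ell$ for every $\ell\in{\mathbb Z}$; in particular $\widehat{n+p^M\ell}-\widehat n=p^M\widehat\ell$ does not depend on such $n$. Second, if $n$ ends in exactly $h$ digits equal to $m$ and its next digit is even, then carrying through those $h$ positions gives $\widehat{n+1}-\widehat n=p^h-\widehat m\,\frac{p^h-1}{p-1}$, and (for $p\ge3$) this quantity is strictly increasing in $h$; since $h$ is unbounded (take $n=m(1+p+\cdots+p^{h-1})$), the differences $\widehat{n+1}-\widehat n$ take infinitely many distinct values as $n$ ranges over ${\mathbb Z}$.

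For the local isomorphism property (for any real $\varepsilon$): given $\Sigma$ as above, pick $M$ with $p^M>2\max_i|n_i|+1$. By the first fact, for every $\ell\in{\mathbb Z}$ the number $\widehat{n_i+p^M\ell}-\widehat{n_i}=p^M\widehat\ell$ is the same for all $i$, so the congruences in the basic observation are simultaneously solvable and $\Sigma+(p^M\widehat\ell\,\varepsilon+c,\,p^M\ell)^t\subset{\mathcal D}_{\varepsilon,\infty}$ for all $c,\ell\in{\mathbb Z}$. Choosing $\ell$ to bring $p^M\ell$ within $p^M/2$ of any prescribed height and then $c$ to bring the first coordinate within $1/2$ of any prescribed value shows that this family of admissible translations is relatively dense in ${\mathbb R}^2$, which yields the local isomorphism property.

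For the dichotomy: if $\varepsilon=a/b$ with $\gcd(a,b)=1$, then $\widehat m_1\varepsilon\in\frac1b{\mathbb Z}$ for every $m_1$, hence ${\mathcal D}_{\varepsilon,\infty}\subset\frac1b{\mathbb Z}\times{\mathbb Z}$, a lattice; any $k$ points of a lattice that fit in a ball of radius $r$ form, after translating one of them to the origin, one of at most $|\,(\frac1b{\mathbb Z}\times{\mathbb Z})\cap\overline B(0,2r)\,|^{\,k-1}$ configurations, so local finiteness holds and $T_\varepsilon+{\mathcal D}_{\varepsilon,\infty}$ is quasi‑periodic. Conversely, if $\varepsilon\notin{\mathbb Q}$, use the second fact to choose for each $h\ge0$ an integer $n(h)$ with $d_h:=\widehat{n(h)+1}-\widehat{n(h)}$ strictly increasing; then the two points $(\widehat{n(h)}\varepsilon,n(h))^t$ and $(-\lfloor d_h\varepsilon\rfloor+\widehat{n(h)+1}\varepsilon,\,n(h)+1)^t$ lie in ${\mathcal D}_{\varepsilon,\infty}$, sit inside a ball of radius $1$, and differ by the vector $(\{d_h\varepsilon\},1)$. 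Because $\varepsilon$ is irrational and the integers $d_h$ are pairwise distinct, the numbers $\{d_h\varepsilon\}$ are pairwise distinct, so these $2$‑point patterns are pairwise translation‑inequivalent — infinitely many of them inside balls of radius $1$ — and local finiteness fails. I expect the main obstacle to be the second combinatorial fact: controlling how carries in the balanced base‑$p$ algorithm make $\widehat{n+1}-\widehat n$ unbounded, together with the (routine but necessary) verification that distinct horizontal offsets really do produce translation‑inequivalent configurations; once these are in hand, the relative‑density estimate and the subset‑of‑a‑lattice argument are straightforward.
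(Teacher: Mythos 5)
Your proposal is correct and follows the same overall strategy as the paper: the local isomorphism property holds for every $\varepsilon$, while local finiteness holds precisely when $\varepsilon\in{\mathbb Q}$; moreover your key construction for the irrational case (consecutive integers $n,n+1$ whose balanced base-$p$ expansions force $\widehat{n+1}-\widehat{n}=p^h$ for arbitrarily large $h$, producing close point pairs with pairwise distinct offsets $\{d_h\varepsilon\}$) is exactly the paper's $t_k,t_k'$ construction, and your lattice observation ${\mathcal D}_{\varepsilon,\infty}\subset\frac1b{\mathbb Z}\times{\mathbb Z}$ for $\varepsilon=a/b$ is the paper's rational-case argument. The one place you genuinely diverge is the local isomorphism property: the paper argues geometrically via the self-replicating structure (any point lies in some $T_\varepsilon+d_x$, and splitting $d_x$ at level $\ell$ places a full copy of ${\mathcal D}_{\varepsilon,\ell}\supset\Sigma$ inside a ball of radius $3R$), whereas you argue arithmetically that translations by $(p^M\widehat{\ell}\,\varepsilon+c,\,p^M\ell)^t$ are carry-free and hence admissible, and that these form a relatively dense set; both are valid, and yours has the mild advantage of identifying an explicit relatively dense group-like family of admissible translations. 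One small slip: in your second combinatorial fact the correct value is $\widehat{n+1}-\widehat{n}=p^h$ regardless of the parity of $m$ (the trailing digits $m$ carry to $-m$, and $\widehat{-m}=\widehat{m}$, so those positions contribute nothing to the difference; only the even digit $j_h$ becoming odd contributes $p^h$); your formula $p^h-\widehat{m}\,\frac{p^h-1}{p-1}$ is off for odd $m$, but since either sequence is strictly increasing in $h$ the distinctness of the $d_h$ — which is all you use — is unaffected.
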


\begin{proof}
By the definition, we first show the local isomorphism property holds. Let $\Sigma$ be a finite subset of ${\mathcal D}_{\varepsilon,\infty}$, then there exists an integer $\ell$ such that $\Sigma\subset {\mathcal D}_{\varepsilon,\ell}$. Let $$R=\text{diam}(A^\ell T_\varepsilon)=\text{diam}(\bigcup_{d\in {\mathcal D}_{\varepsilon,\ell}}(T_\varepsilon+d)).$$ We claim that every ball $B_x(3R)$ with center $x\in {\mathbb R}^2$ and radius $3R$, contains a translating copy of $\Sigma$. Indeed, for the $x$, there exists a $d_x\in {\mathcal D}_{\varepsilon,\infty}$ such that $x\in T_\varepsilon+d_x$ by the tiling property. If $d_x\notin {\mathcal D}_{\varepsilon,\ell}$, then there is a larger integer $\ell'>\ell$ such that
$$d_x\in \sum_{j=0}^{\ell-1}A^jd_{i_j}+\sum_{j=\ell}^{\ell'}A^jd_{i_j}, \quad \text{where } d_{i_j}\in {\mathcal D}_\varepsilon.$$ Let $d_x'=\sum_{j=\ell}^{\ell'}A^jd_{i_j}$, then $d_x-d_x'\in {\mathcal D}_{\varepsilon,\ell}$. By using $0\in T_\varepsilon$ and the triangle inequality,  for $d\in {\mathcal D}_{\varepsilon,\ell}$, we have, $$\|d_x'+d-x\|\le  \|x-d_x\|+\|d_x-d_x'\|+\|d\|\le 3R.$$
Thus $d_x'+{\mathcal D}_{\varepsilon,\ell}\subset B_x(3R)$, yielding $d_x'+\Sigma\subset B_x(3R)$. If $d_x\in {\mathcal D}_{\varepsilon,\ell}$, the above is still true as $d_x'=0$.

Now we show $T_\varepsilon+{\mathcal D}_{\varepsilon,\infty}$ has the local finiteness property if and only if $\varepsilon$ is rational, then the desired result follows.

Suppose $\varepsilon$ is irrational. Consider $m_1=m(1+p+\cdots+p^{k-1})=\frac{1}{2}(p^k-1), \   n_1=(-m)(1+p+\cdots+p^{k-1})+ 1\cdot p^k=\frac{1}{2}(p^k+1)$. If $m$ is an even integer, then $\widehat{m}_1=0, \widehat{n}_1=p^k$. We have $$t_k:=\left[\begin{array}{c}
m_2\\
m_1
\end{array}\right], \   t_k':=\left[\begin{array}{c}
n_2+\varepsilon p^k\\
n_1
\end{array}\right]\in {\mathcal D}_{\varepsilon,\infty},  \  \forall  \   m_2, n_2\in {\mathbb Z}.$$
If $m$ is an odd integer, then $\widehat{m}_1=1+p+\cdots+p^{k-1}=\frac{1-p^k}{1-p},\ \widehat{n}_1=1+p+\cdots+p^{k-1}+p^k=\frac{1-p^k}{1-p}+p^k$. We have $$t_k:=\left[\begin{array}{c}
m_2+\frac{\varepsilon(1-p^k)}{1-p}\\
m_1
\end{array}\right], \   t_k':=\left[\begin{array}{c}
n_2+\varepsilon p^k+\frac{\varepsilon(1-p^k)}{1-p}\\
n_1
\end{array}\right]\in {\mathcal D}_{\varepsilon,\infty},  \  \forall  \  m_2, n_2\in {\mathbb Z}.$$
In any case, by letting $m_2$ be the integral part of $n_2+\varepsilon p^k$, we always get  $\|t_k-t_k'\|\le \sqrt{2}$. Thus there are infinitely many translation-inequivalent arrangements of $t_k$ and $t_k'$.

On the other hand, suppose $\varepsilon=\frac ab$ is a rational number where $a,b$ are co-prime numbers. Given an integer $c>0$, and any two elements
$$\left[\begin{array}{c}
m_2+\widehat{m}_1\varepsilon\\
m_1
\end{array}\right], \  \left[\begin{array}{c}
n_2+\widehat{n}_1\varepsilon\\
n_1
\end{array}\right]\in {\mathcal D}_{\varepsilon,\infty}.$$
$|m_1-m_2|\le c$ can admit at most $2c+1$ choices for $m_1-m_2$; while $b|(m_2+\widehat{m}_1\varepsilon)-(n_2+\widehat{n}_1\varepsilon)|=|b(m_2-n_2)+a(\widehat{m}_1-\widehat{n}_1)|\in {\mathbb Z}$, hence  $|(m_2+\widehat{m}_1\varepsilon)-(n_2+\widehat{n}_1\varepsilon)|\le c$ can admit at most $b(2c+1)$ choices for $(m_2+\widehat{m}_1\varepsilon)-(n_2+\widehat{n}_1\varepsilon)$. Thus the number of possible differences of any two points of ${\mathcal D}_{\varepsilon,\infty}$ in a square $S$ with edge length $c$ is finite. Inductively, we have the same conclusion for $k$ points in $S$. Therefore, $T_\varepsilon+{\mathcal D}_{\varepsilon,\infty}$ has the local finiteness property.
\end{proof}

\section{\bf Self-similar sets}
Let $T_{\varepsilon}:=T(A, {\mathcal D}_\varepsilon)$ be the associated self-similar set in Theorem \ref{mainthm3}. We only need to consider the case that $p>0$. Under the assumption of Theorem \ref{mainthm3}, we can write the IFS as follows
\begin{equation}\label{eq.IFS}
f_{i,j}\left(\left[\begin{array}{c}
x\\
y
\end{array}\right]\right)=A^{-1}\left(\left[\begin{array}{c}
x\\
y
\end{array}\right]+\left[\begin{array}{c}
i+a_{ij}\\
j+a_{ij}
\end{array}\right]\right)
\end{equation}
where $i,j\in\{0,1,\dots,p-1\}, \  a_{ij}=\delta_{ij}\varepsilon$. Denote by $t_{i,j}$  the fixed point of  the contraction $f_{i,j}$. It is easy to compute that
$$t_{0,0}=\left[\begin{array}{c}\frac{\varepsilon}{p-1}\\ \frac{\varepsilon}{p-1}\end{array}\right],t_{1,0}=\left[\begin{array}{c}\frac{1}{p-1}\\ 0\end{array}\right],t_{0,1}=\left[\begin{array}{c}0\\\frac{1}{p-1}\end{array}\right],t_{1,p-1}=\left[\begin{array}{c}\frac{1}{p-1}\\1\end{array}\right],$$
$$t_{0,p-1}=\left[\begin{array}{c}0\\1\end{array}\right],t_{p-1,0}=\left[\begin{array}{c}1\\0\end{array}\right],t_{p-1,1}=\left[\begin{array}{c}1\\\frac{1}{p-1}\end{array}\right],
t_{p-1,p-1}=\left[\begin{array}{c}1+\frac{\varepsilon}{p-1}\\1+\frac{\varepsilon}{p-1}\end{array}\right].$$

The following simple result is very useful for proving the connectedness of $T_{\varepsilon}$.

\begin{Prop}\label{prop4}
(i) for $i\neq j$ and $i+1\neq j$, we have $f_{i,j}(T_{\varepsilon})\cap f_{i+1,j}(T_{\varepsilon})\neq\emptyset;$

(ii) for  $j\neq i$ and $j+1\neq i$, we have $f_{i,j}(T_{\varepsilon})\cap f_{i,j+1}(T_{\varepsilon})\neq\emptyset$;

(iii) for $i=0,1,\dots,p-2$, we have $f_{i+1,i}(T_{\varepsilon})\cap f_{i,i+1}(T_{\varepsilon})\neq\emptyset$ and  $f_{i,i}(T_{\varepsilon})\cap f_{i+1,i+1}(T_{\varepsilon})\neq\emptyset$.
\end{Prop}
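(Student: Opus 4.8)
\textbf{Proof proposal for Proposition \ref{prop4}.}
The plan is to exhibit, in each case, an explicit point lying in both pieces, by comparing the radix expansions of elements of $f_{i,j}(T_\varepsilon)$ and $f_{i',j'}(T_\varepsilon)$. Recall that a point of $f_{i,j}(T_\varepsilon)$ has the form $A^{-1}\big((i+a_{ij},j+a_{ij})^t+z\big)$ with $z\in T_\varepsilon$, and $T_\varepsilon$ admits the radix expansion \eqref{eq1.1} with digits from $\mathcal D_\varepsilon$. So showing $f_{i,j}(T_\varepsilon)\cap f_{i',j'}(T_\varepsilon)\ne\emptyset$ amounts to finding sequences of digits $d_1d_2\cdots$ and $d_1'd_2'\cdots$ in $\mathcal D_\varepsilon$ with
$$(i+a_{ij},j+a_{ij})^t+\sum_{k\ge1}A^{-k}d_k=(i'+a_{i'j'},j'+a_{i'j'})^t+\sum_{k\ge1}A^{-k}d_k'.$$
For parts (i) and (ii) the relevant coordinates never land on the diagonal, so all the $a$'s appearing are $0$ and the identity reduces to the classical ``carry'' computation for the square tile $[0,1]^2$ of $(A,\{0,\dots,p-1\}^2)$: e.g.\ for (i) one takes the constant tails $d_k=(p-1,\,\star)^t$ on the $f_{i,j}$ side and $d_k'=(0,\,\star)^t$ on the $f_{i+1,j}$ side, choosing the second coordinates $\star$ to be a common value $j'$ with $j'\ne j,\, j'+1\ne j$ if $0\le j\le p-1$ allows it — and one must simply make sure the chosen second-coordinate digit is \emph{not} the one that would switch on $a$; since $p>2$ there is always such a choice, and the first coordinates telescope exactly as $\frac{p-1}{p-1}=1=0+1$. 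Part (ii) is identical with the roles of the two coordinates swapped.

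For part (iii) the diagonal shift genuinely enters. Take first $f_{i+1,i}(T_\varepsilon)\cap f_{i,i+1}(T_\varepsilon)$: here $a_{i+1,i}=a_{i,i+1}=0$, so again this is the plain square-tile intersection — the edge between the $(i+1,i)$ and $(i,i+1)$ cells of $[0,1]^2$ meet along a single corner point, and one writes the two constant-tail expansions realizing it (on one side the tail $(p-1,0,\dots)$, on the other $(0,p-1,\dots)$, in the appropriate coordinate slots), with no $a$-digit ever triggered because the tail digits are off-diagonal. The subtle case is $f_{i,i}(T_\varepsilon)\cap f_{i+1,i+1}(T_\varepsilon)$, where $a_{ii}=a_{i+1,i+1}=\varepsilon$: the shift $\varepsilon$ cancels between the two sides since it appears identically in both $(i+a_{ii},i+a_{ii})^t$ shifts \emph{only if} the digit tails I append are themselves off-diagonal; so I will choose, on the $f_{i,i}$ side, the tail consisting of the digit $(p-1,p-1)^t+\big(\text{its }a\big)$… no — rather, I must pick a tail whose every digit is off the diagonal so no further $\varepsilon$ appears, and whose coordinates telescope to produce the corner shared by the $(i,i)$ and $(i+1,i+1)$ cells; because $p>2$, the values $p-1$ and $0$ used in the two coordinates can be paired with distinct second/first entries to stay off-diagonal. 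Then the $+\varepsilon$ in the leading shift on the left exactly matches the $+\varepsilon$ on the right (both cells are diagonal), and the remaining integer parts give the usual $\frac{p-1}{p-1}=1$ carry in each coordinate.

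The point I expect to be the genuine obstacle is precisely the bookkeeping in this last intersection: one has to verify that a tail of digits staying off the diagonal \emph{and} summing (coordinatewise) to the required limit actually exists, i.e.\ that the constant digit $(p-1,p-2)^t$ or similar off-diagonal choices are available — this is exactly where the hypothesis $|p|>2$ is used, since for $p=2$ the only digits are $(0,0),(0,1),(1,0),(1,1)$ and two of them are diagonal, leaving no room. So I would state the case analysis as: (a) $p>2$ guarantees, for any required coordinate value $v\in\{0,p-1\}$ in one slot, at least two admissible off-diagonal digits with that slot equal to $v$; (b) with such tails the telescoped sums match; (c) conclude each listed intersection is nonempty. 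Everything else is the routine verification that $\sum_{k\ge1}p^{-k}(p-1)=1$ and that the prescribed finite prefixes agree, which I would present compactly rather than in full.
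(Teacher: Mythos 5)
Your parts (i), (ii) and the first intersection in (iii) are essentially the paper's argument: the constant-tail radix points you describe are exactly the fixed points $t_{k,\ell}$ of the maps $f_{k,\ell}$, and the paper exhibits the common points as $f_{i,j}(t_{p-1,1})=f_{i+1,j}(t_{0,1})$ (with the tail digits $(p-1,1)$ and $(0,1)$ off the diagonal precisely because $p>2$, as you require) and $f_{i+1,i}(t_{0,p-1})=f_{i,i+1}(t_{p-1,0})$. Up to the change of language from fixed points to constant radix tails, the two proofs coincide there.

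The second intersection in (iii) is where your plan breaks, and it breaks exactly at the step you flag as ``the genuine obstacle'' and leave unverified. Writing a point of $f_{i,i}(T_\varepsilon)$ as $A^{-1}\bigl((i+\varepsilon,i+\varepsilon)^t+z\bigr)$ and one of $f_{i+1,i+1}(T_\varepsilon)$ as $A^{-1}\bigl((i+1+\varepsilon,i+1+\varepsilon)^t+z'\bigr)$, you need $z-z'=(1,1)^t$ with $z,z'\in T_\varepsilon$. If, as you insist, every tail digit is off the diagonal, then all the $a$'s in the tails vanish, so $z-z'=\bigl(\sum_k(i_k-i'_k)p^{-k},\ \sum_k(j_k-j'_k)p^{-k}\bigr)$. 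The first coordinate equals $1$ only in the extreme case $i_k=p-1$ and $i'_k=0$ for every $k$ (since $\sum_k(p-1)p^{-k}=1$ is the supremum), and the second coordinate likewise forces $j_k=p-1$ and $j'_k=0$ for every $k$. That makes every tail digit $(p-1,p-1)$ on one side and $(0,0)$ on the other --- both diagonal, contradicting your own requirement; and your fallback candidate $(p-1,p-2)^t$ produces a tail summing to $\bigl(1,\tfrac{p-2}{p-1}\bigr)$, which misses the needed $(1,1)$. The correct move is the opposite of the one you settled on: take the \emph{diagonal} tails on both sides, i.e.\ $z=t_{p-1,p-1}=\bigl(1+\tfrac{\varepsilon}{p-1},1+\tfrac{\varepsilon}{p-1}\bigr)^t$ and $z'=t_{0,0}=\bigl(\tfrac{\varepsilon}{p-1},\tfrac{\varepsilon}{p-1}\bigr)^t$. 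The extra $\tfrac{\varepsilon}{p-1}$ then appears identically in $z$ and $z'$ and cancels in the difference, yielding $f_{i,i}(t_{p-1,p-1})=f_{i+1,i+1}(t_{0,0})$, which is the paper's common point. With that repair your argument is complete.
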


\begin{proof}
(i) For $i\neq j\ \text{and}\ i+1\neq j,$ we see that
$$f_{i,j}\left(\left[\begin{array}{c}x\\y\end{array}\right]\right)=\left[\begin{array}{c}\frac{x+i}{p}\\\frac{y+j}{p}\end{array}\right]\ \text{and}\ \
f_{i+1,j}\left(\left[\begin{array}{c}x\\y\end{array}\right]\right)=\left[\begin{array}{c}\frac{x+i+1}{p}\\\frac{y+j}{p}\end{array}\right].$$
Since $$f_{i,j}(t_{p-1,1})=\left[\begin{array}{c}\frac{i+1}p\\\frac{1}{p(p-1)}+\frac{j}{p}\end{array}\right]=f_{i+1,j}(t_{0,1}),$$  we have $f_{i,j}(T_{\varepsilon})\cap f_{i+1,j}(T_{\varepsilon})\neq\emptyset$. By symmetry of $T_{\varepsilon}$ (with respect to line $y=x$), (ii) also holds.

(iii) For $i=0,1,\dots,p-2,$ we see that
$$f_{i+1,i}\left(\left[\begin{array}{c}x\\y\end{array}\right]\right)=\left[\begin{array}{c}\frac{x+i+1}p\\\frac{y+i}p\end{array}\right]\ \text{and}\ \
f_{i,i+1}\left(\left[\begin{array}{c}x\\y\end{array}\right]\right)=\left[\begin{array}{c}\frac{x+i}p\\\frac{y+i+1}p\end{array}\right].$$
Since $$f_{i+1,i}(t_{0,p-1})=\left[\begin{array}{c}\frac{i+1}p\\\frac{i+1}p\end{array}\right]=f_{i,i+1}(t_{p-1,0}),$$
 we have  $f_{i+1,i}(T_{\varepsilon})\cap f_{i,i+1}(T_{\varepsilon})\neq\emptyset$.

From
$$f_{i,i}\left(\left[\begin{array}{c}x\\y\end{array}\right]\right)=\left[\begin{array}{c}\frac{x+i+\varepsilon}p\\\frac{y+i+\varepsilon}p\end{array}\right],$$
it follows that $$f_{i,i}(t_{p-1,p-1})=\left[\begin{array}{c}\frac{i+1+\varepsilon}p+\frac{\varepsilon}{p(p-1)}\\ \frac{i+1+\varepsilon}p+\frac{\varepsilon}{p(p-1)}\end{array}\right]=f_{i+1,i+1}(t_{0,0}).$$
Hence  $f_{i,i}(T_{\varepsilon})\cap f_{i+1,i+1}(T_{\varepsilon})\neq\emptyset$.
\end{proof}

\bigskip

\noindent{\it \textbf{Proof of Theorem \ref{mainthm3}:}}

First we assume that $\varepsilon\geq 0$.
For the function system $\{f_{0,0},f_{1,1}, \dots, f_{p-1,p-1}\}$,
there exists a unique attractor
$$\pi=seg\left(\left[\begin{array}{c}\frac{\varepsilon}{p-1}\\ \frac{\varepsilon}{p-1}\end{array}\right],\left[\begin{array}{c}1+\frac{\varepsilon}{p-1}\\1+\frac{\varepsilon}{p-1}\end{array}\right]\right)$$
where $seg(u,v)$ denotes the straight line-segment between vector $u$ and vector $v$.

Let \[V_{1}=\bigcup_{i<j}f_{i,j}(T_{\varepsilon}),\quad  V_{2}=\bigcup_{i=j}f_{i,j}(T_{\varepsilon}),\quad  V_{3}=\bigcup_{i>j}f_{i,j}(T_{\varepsilon}).\]
Then $T_{\varepsilon}=V_{1}\cup V_{2}\cup V_{3} \;\text{and}\;\pi\subset V_{2}.$
It is easy to see that $T_{\varepsilon}$ is located between two lines:
$$L_{1}:=\left\{\left[\begin{array}{c}x\\y\end{array}\right]\in\mathbb{R}^{2}: y=x+1\right\},\ \ L_{2}:=\left\{\left[\begin{array}{c}x\\y\end{array}\right]\in\mathbb{R}^{2}: y=x-1\right\}.$$
$V_{2}$ is located between two lines:
$$L_{3}:=\left\{\left[\begin{array}{c}x\\y\end{array}\right]\in\mathbb{R}^{2}: y=x+\frac{1}{p}\right\},\ \ L_{4}:=\left\{\left[\begin{array}{c}x\\y\end{array}\right]\in\mathbb{R}^{2}: y=x-\frac{1}{p}\right\}.$$
$V_{1},\;V_{3}$ are separated by a line:
$$L_{5}:=\left\{\left[\begin{array}{c}x\\y\end{array}\right]\in\mathbb{R}^{2}: y=x\right\}.$$
Obviously, the segment $\pi\subset L_{5}$. (see Figure \ref{fig.2}(a))

By Proposition \ref{prop4}, for any distinct pairs $(i,j),\ (i',j')$ with $i\neq j$ and $i'\neq j'$, there exists a sequence $(i,j)=(i_1,j_1),(i_2,j_2),\dots,(i_n,j_n)=(i',j')$ with $i_k\neq j_k,\ 1\leq k\leq n$ so that  $f_{i_k,j_k}(T_{\varepsilon})\cap f_{i_{k+1},j_{k+1}}(T_{\varepsilon})\neq\emptyset$ for all $1\leq k<n.$ On the other hand, for any $i\neq j\in\{0,1,\dots,p-1\}$ there exists a sequence $i=j_1,j_2,\dots,j_m=j$ of indices in $\{0,1,\dots,p-1\}$ so that $f_{j_k,j_k}(T_{\varepsilon})\cap f_{j_{k+1},j_{k+1}}(T_{\varepsilon})\neq\emptyset$ for all $1\leq k<m$.
Hence, by Lemma \ref{lem2.2} and symmetry of $T_{\varepsilon}$ (with respect to the line $L_5$), $T_{\varepsilon}$ is connected if and only if $V_{1}\cap V_{2}\neq\emptyset$.

Since
$$f_{0,1}(t_{p-1,0})=\left[\begin{array}{c}\frac1p\\\frac1p\end{array}\right]\in f_{0,1}(T_{\varepsilon})\subset V_{1}$$ and
$$f_{p-2,p-1}(t_{p-1,0})=\left[\begin{array}{c}\frac{p-1}p\\\frac{p-1}p\end{array}\right]\in f_{p-2,p-1}(T_{\varepsilon})\subset V_{1}.$$
It follows that these two points belong to the line $L_{5}$.  We shall consider $V_1\cap V_2$ by comparing these two points with the endpoints of the segment $\pi$.

(i) If  $\varepsilon\leq\frac{(p-1)^{2}}{p}$, i.e., $\frac{\varepsilon}{p-1}\leq\frac{p-1}{p}$. Then $f_{p-2,p-1}(t_{p-1,0})\in\pi,$ that is $V_{1}\cap V_{2}\neq\emptyset,$
which implies $T_{\varepsilon}$ is connected.

(ii) If $\varepsilon>\frac{(p-1)^{2}}{p}$, i.e., $\frac{\varepsilon}{p-1}>\frac{p-1}{p}$.  We let a point
$$\omega:=f_{0,0}\circ f_{1,p-1}(t_{0,p-1})=\left[\begin{array}{c}\frac1{p^2}+\frac{\varepsilon}p\\\frac1p+\frac{\varepsilon}p\end{array}\right]\in f_{0,0}(T_{\varepsilon})\subset V_{2}$$ and a line
$$L_{6}:=\left\{\left[\begin{array}{c}x\\y\end{array}\right]\in\mathbb{R}^{2}: y=x+\frac{1}{p}-\frac{1}{p^2}\right\}.$$
Then $\omega\in L_6$ (see Figure \ref{fig.2}(b)).  Consider the compositions $f_{p-2,p-1}\circ f_{i+1,i}$ where $i=0,1,\dots,p-2$, we have
$$f_{p-2,p-1}\circ f_{i+1,i}(\pi)
=seg\left(\left[\begin{array}{c}\frac{\varepsilon}{p^2(p-1)}+\frac{i+1}{p^2}+\frac{p-2}{p}\\ \frac{\varepsilon}{p^2(p-1)}+\frac{i}{p^2}+\frac{p-1}{p}\end{array}\right],\left[\begin{array}{c} \frac{\varepsilon}{p^2(p-1)}+\frac{i+2}{p^2}+\frac{p-2}{p}\\ \frac{\varepsilon}{p^2(p-1)}+\frac{i+1}{p^2}+\frac{p-1}{p}\end{array}\right]\right).$$
Obviously, $f_{p-2,p-1}\circ f_{i+1,i}(\pi)\subset L_{6}$ holds for any $i$, and the right endpoint  of $f_{p-2,p-1}\circ f_{i+1,i}(\pi)$ is equal to the left endpoint of $f_{p-2,p-1}\circ f_{i+2,i+1}(\pi)$. That is,
$$\pi_{1}:=\bigcup^{p-2}_{i=0}f_{p-2,p-1}\circ f_{i+1,i}(\pi)
=seg\left(\left[\begin{array}{c}\frac{\varepsilon}{p^2(p-1)}+\frac{(p-1)^2}{p^2}\\ \frac{\varepsilon}{p^2(p-1)}+\frac{p-1}{p}\end{array}\right],\left[\begin{array}{c} \frac{\varepsilon}{p^2(p-1)}+\frac{p-1}{p}\\ \frac{\varepsilon}{p^2(p-1)}+\frac{p^2-1}{p^2}\end{array}\right]\right).$$
Then  $\pi_{1}\subset f_{p-2,p-1}(T_{\varepsilon})\subset V_{1}$ and $\pi_1\subset L_{6}$ (see Figure \ref{fig.2}(b)). Therefore, to show $V_1\cap V_2\ne\emptyset$, we only need to compare the $x$-coordinates of $\omega$ and  $\pi_{1}$.

If
$$\frac{\varepsilon}{p^2(p-1)}+\frac{(p-1)^2}{p^2}\leq\frac{\varepsilon}{p}+\frac{1}{p^2}\leq\frac{\varepsilon}{p^2(p-1)}+\frac{p-1}{p},$$
i.e., $$ \frac{p(p-1)(p-2)}{p^2-p-1}\leq\varepsilon\leq p-1,$$
then  $\omega\in\pi_{1}$. Hence  $V_{1}\cap V_{2}\neq\emptyset,$ which implies $T_{\varepsilon}$ is connected.  As $$\frac{p(p-1)(p-2)}{p^2-p-1}<\frac{(p-1)^2}{p} \ \ \text{when} \  \  p>2,$$ then $T_{\varepsilon}$ is connected if $\varepsilon\leq p-1$.

(iii) If $p-1<\varepsilon\leq\frac{(p-1)^2}{p-2}$, by using a similar argument as above, we can find a point $\theta$ in $V_2$ and a line-segment $\pi_2$ in $V_1$ so that $\theta\in\pi_2$. Let $$\theta:=f_{0,0}(t_{0,p-1})=\left[\begin{array}{c}\frac{\varepsilon}{p}\\\frac{\varepsilon+1}p\end{array}\right]\in f_{0,0}(T_{\varepsilon})\subset V_{2},$$
then $\theta\in L_{3}.$ Define a line-segment $$\pi_2:=f_{p-2,p-1}(\pi)
=seg\left(\left[\begin{array}{c}\frac{\varepsilon}{p(p-1)}+\frac{p-2}p\\ \frac{\varepsilon}{p(p-1)}+\frac{p-1}p\end{array}\right],\left[\begin{array}{c}\frac{\varepsilon}{p(p-1)}+\frac{p-1}p\\ \frac{\varepsilon}{p(p-1)}+1\end{array}\right]\right).$$
Trivially we have $\pi_2\subset f_{p-2,p-1}(T_\varepsilon)\subset V_{1}$ and $\pi_2\subset L_{3}$. (see Figure \ref{fig.2}(c))

From $p-1<\varepsilon\leq\frac{(p-1)^2}{p-2}$, it follows that
$$\frac{\varepsilon}{p(p-1)}+\frac{p-2}{p}<\frac{\varepsilon}{p}\leq\frac{\varepsilon}{p(p-1)}+\frac{p-1}{p}.$$
Then $\theta\in\pi_2$, and $V_{1}\cap V_{2}\neq\emptyset$. Hence $T_{\varepsilon}$ is connected.

\begin{figure}[h]
	\centering
	\subfigure[]{
		\includegraphics[width=4.2cm, height=4.8cm]{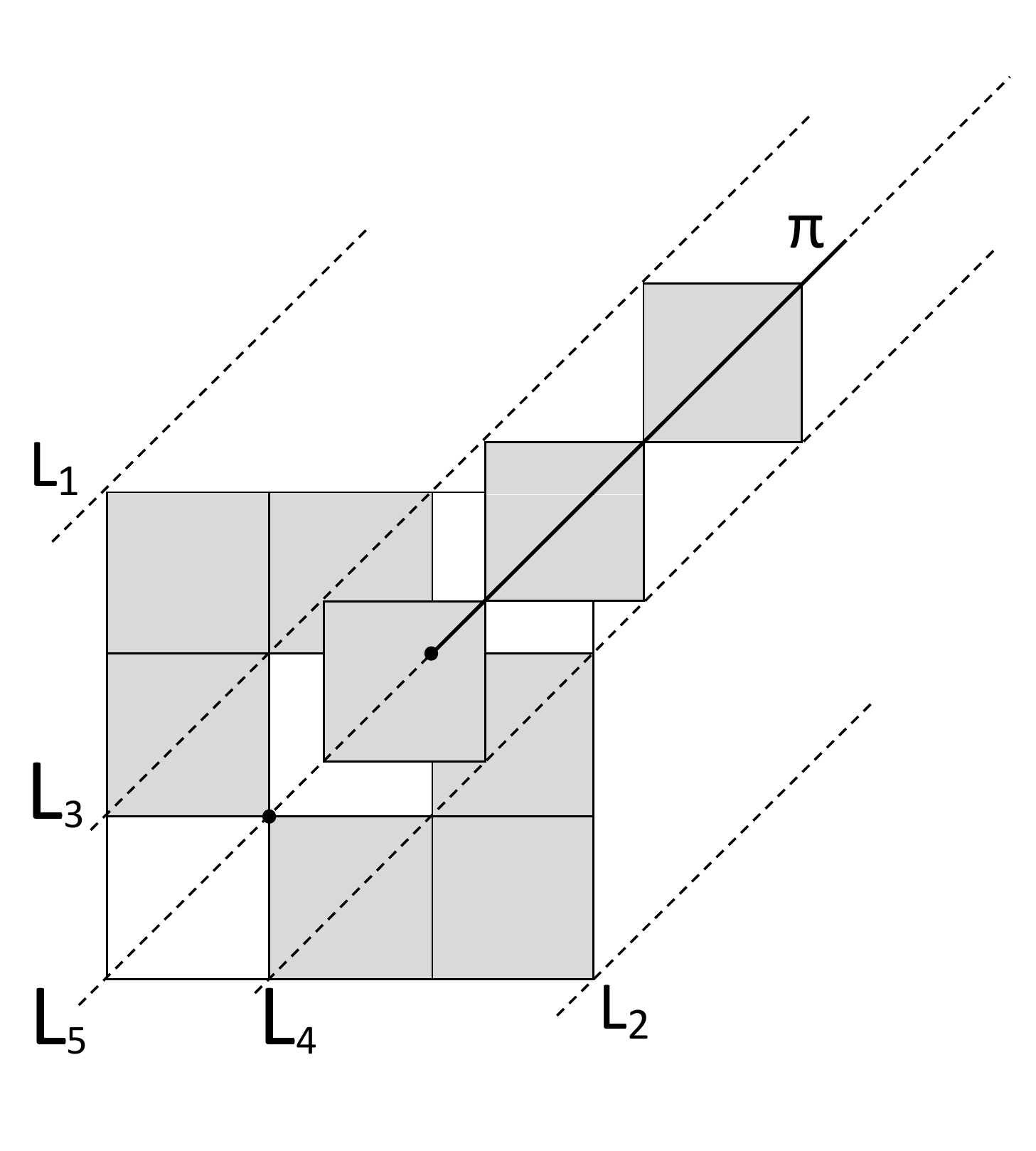}
	}\quad
	\subfigure[]{
		\includegraphics[width=4.2cm, height=4.8cm]{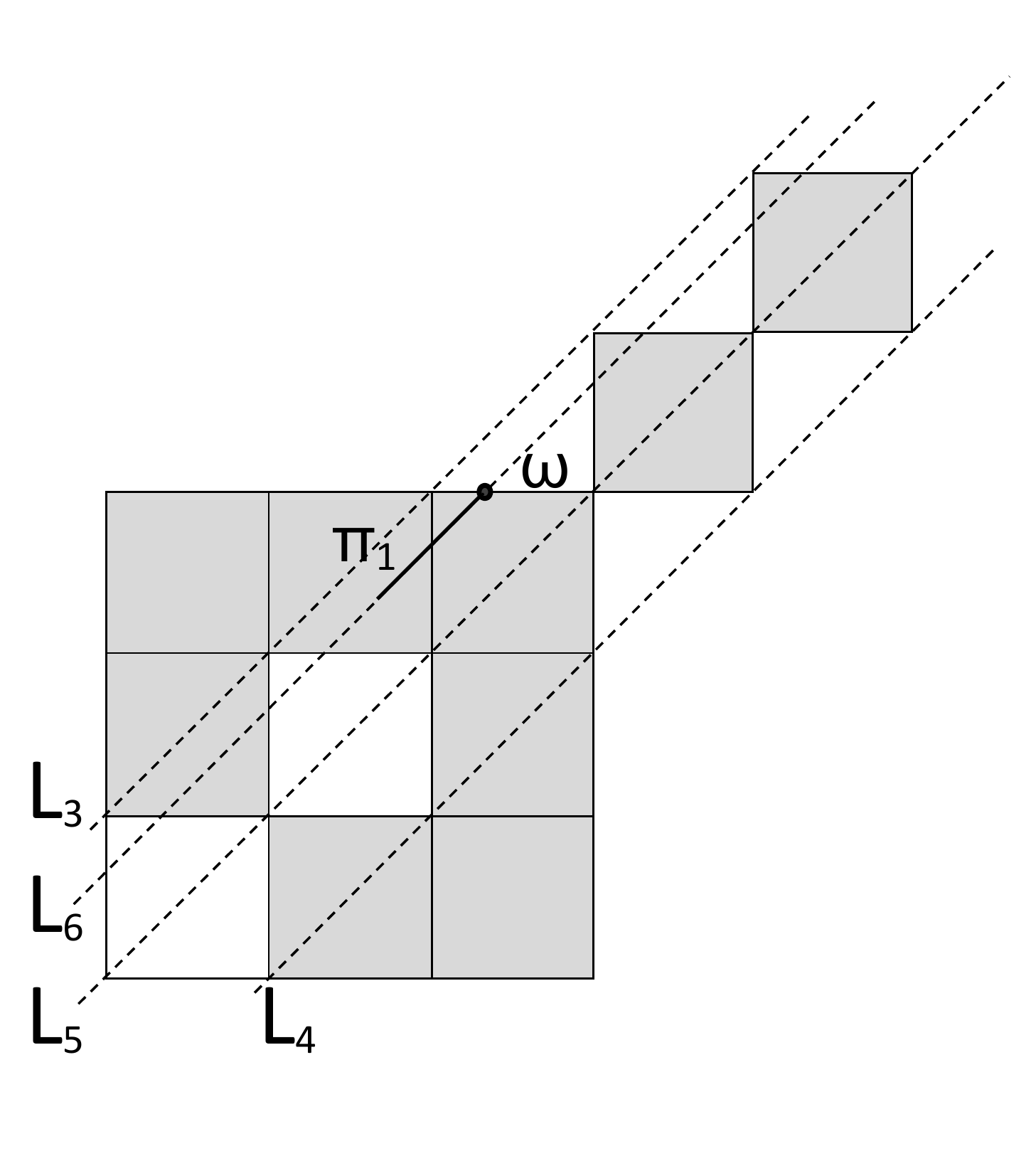}
	}
	\quad
	\subfigure[]{
		\includegraphics[width=4.2cm, height=4.8cm]{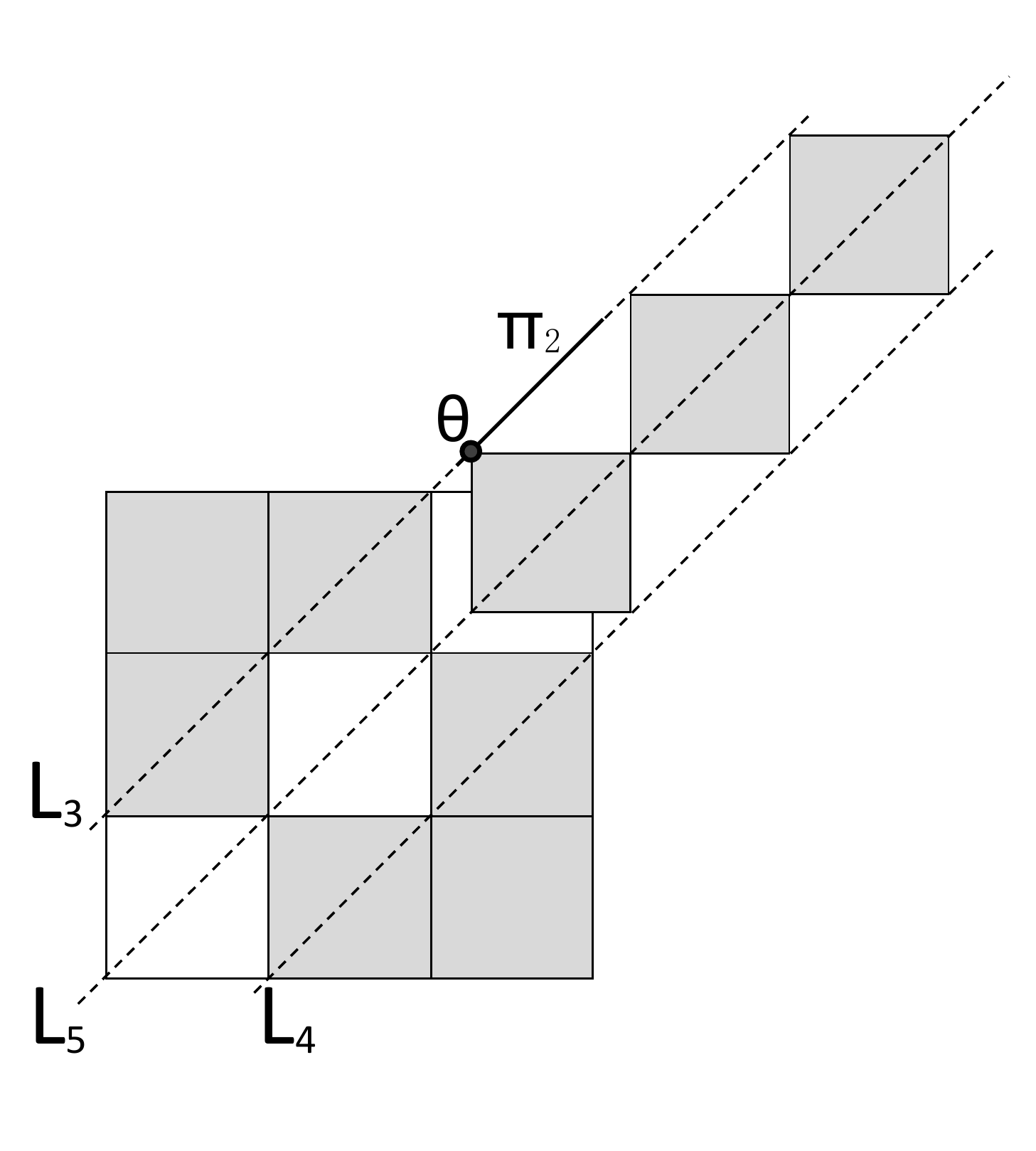}
	}
	\caption{Shifts of diagonal digits along the diagonal line.}\label{fig.2}
\end{figure}

(iv) If $\varepsilon>\frac{(p-1)^2}{p-2}$. Since the right endpoint of $\pi$ has the maximum $x$-coordinate (or $y$-coordinate) in $T_{\varepsilon}$,  $f_{i,j}(\pi)$ has the maximum $x$-coordinate in $f_{i,j}(T_{\varepsilon})$. As the $x$-coordinate of the right endpoint of $f_{i,j}(\pi)$ is $\frac{\varepsilon}{p(p-1)}+\frac{i+1}{p},\ \text{where}\ i<j\ \text{and}\ 0\leq i<p-1$. Then  the right endpoint of $\pi_2$ has the maximum $x$-coordinate, say $\frac{\varepsilon}{p(p-1)}+\frac{p-1}{p}$,  in $V_1$. Note that the point $\theta$ has the minimum $x$-coordinate in $V_2$, and both $\theta$ and $\pi_2$ lie in the same line $L_3$. If  $\varepsilon>\frac{(p-1)^2}{p-2}$, then $\frac{\varepsilon}{p}>\frac{\varepsilon}{p(p-1)}+\frac{p-1}{p}$, implying $\theta\not\in\pi_2$. Hence $V_1\cap V_2=\emptyset$, that is, $T_{\varepsilon}$ is disconnected.

By combining (i), (ii), (iii) and (iv), we prove that: when $\varepsilon\geq 0$, $T_{\varepsilon}$ is connected if and only if $\varepsilon\leq\frac{(p-1)^2}{p-2}$ (see Figure \ref{fig.3}). Similarly for the case that $\varepsilon\leq0$, we can show that $T_{\varepsilon}$ is connected if and only if $\varepsilon\geq -\frac{(p-1)^2}{p-2}$. Therefore, we complete the proof of  Theorem \ref{mainthm3}.
\hfill $\Box$

\begin{figure}[h]
	\centering
	\subfigure[$\varepsilon=3$]{
		\includegraphics[width=4cm, height=4cm]{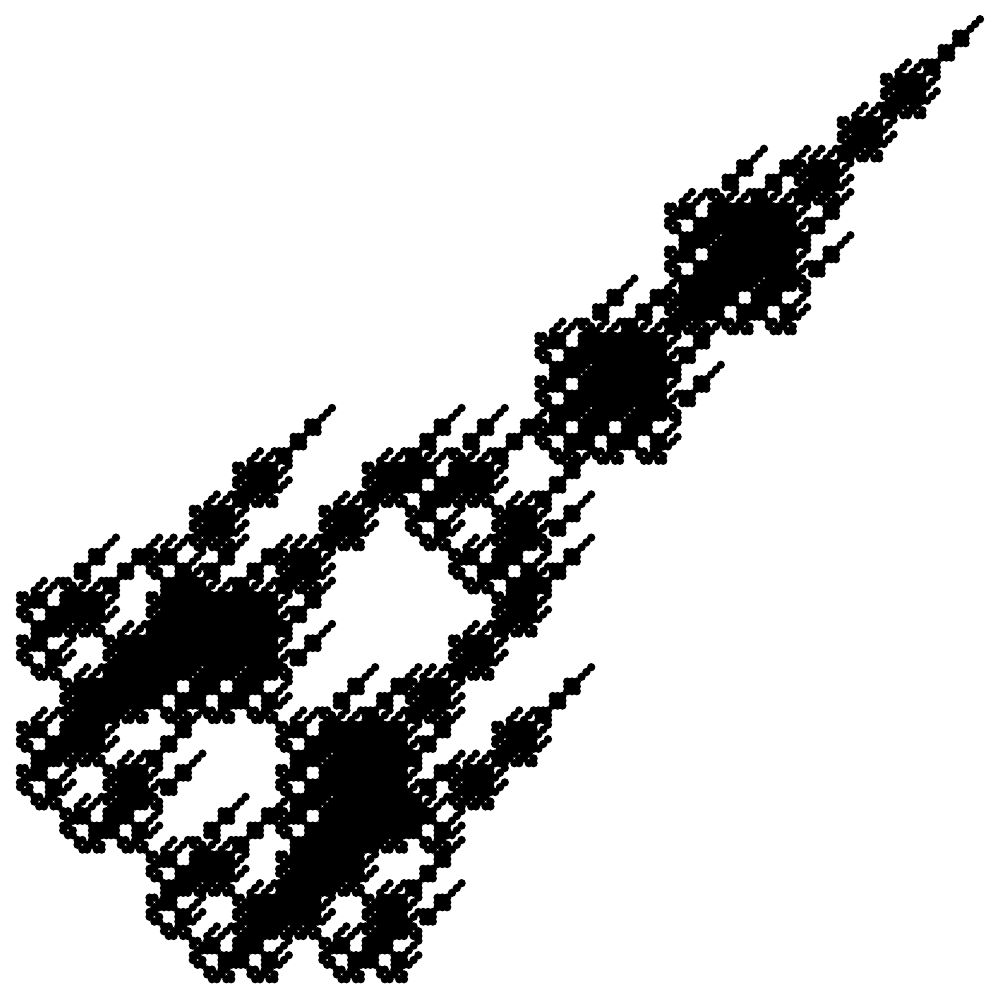}
	}\quad
	\subfigure[$\varepsilon=4$]{
		\includegraphics[width=4cm, height=4cm]{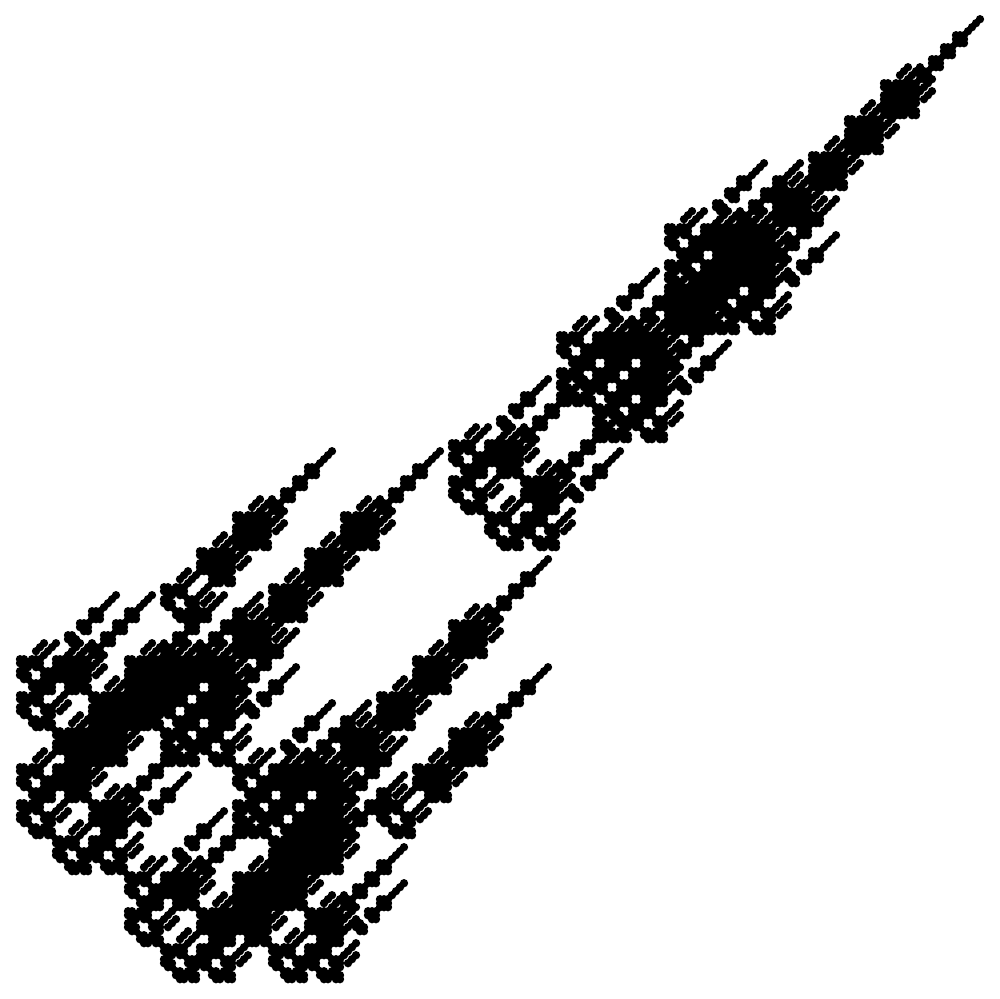}
	}\quad
	\subfigure[$\varepsilon=5$]{
		\includegraphics[width=4cm, height=4cm]{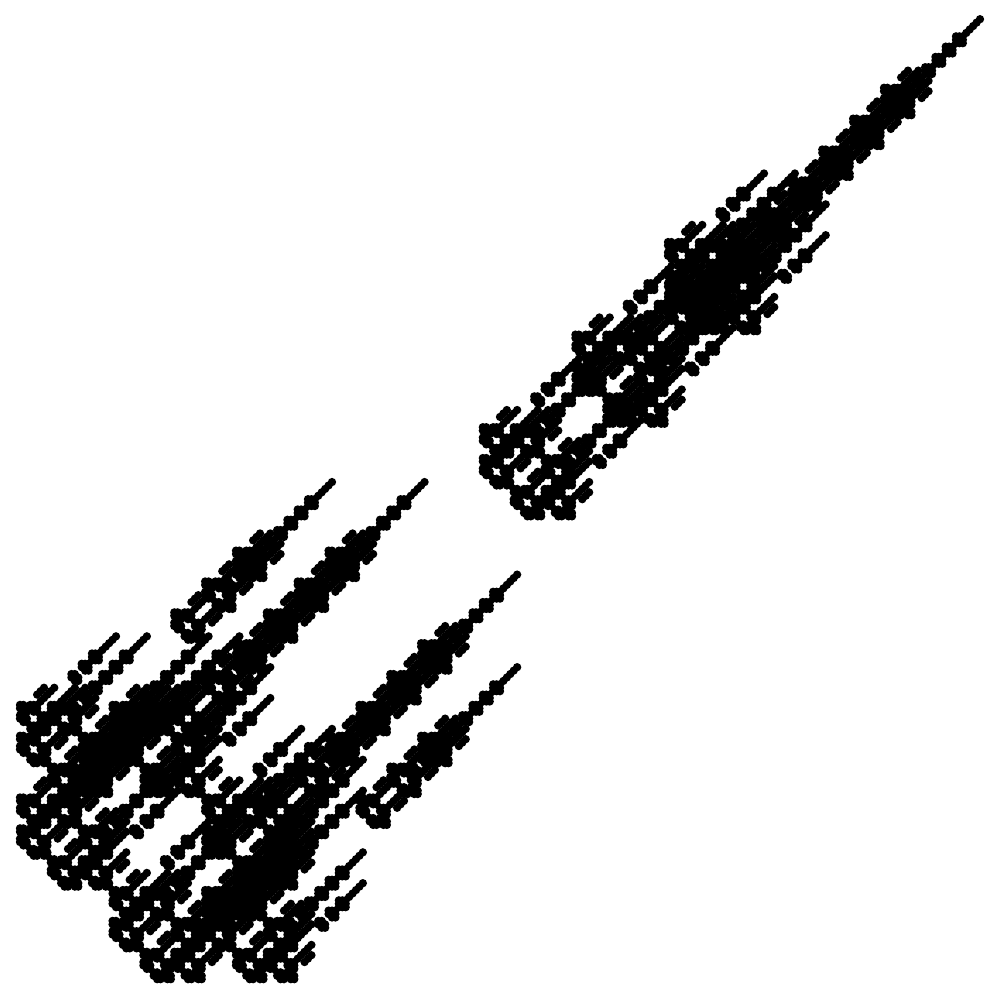}
	}
      \caption{An illustration of Theorem \ref{mainthm3} by taking $p=3$.}\label{fig.3}
\end{figure}

\bigskip

We remark that under the condition of Theorem \ref{mainthm3}, the open set condition does not always hold. For example  if we take $\varepsilon=\ell+k/p$ where $\ell=0,1,\dots,p-2$ and $k=1,2,\dots,p-1$,  we have
$$f_{0,0}\circ f_{0,p-1}\left(\left[\begin{array}{c}x\\y\end{array}\right]\right)=\left[\begin{array}{c}\frac{x}{p^2}+\frac{k}{p^2}+\frac{\ell}{p}\\
\frac{y}{p^2}+\frac{k-1}{p^2}+\frac{\ell+1}{p}\end{array}\right]=f_{\ell,\ell+1}\circ f_{k,k-1}\left(\left[\begin{array}{c}x\\y\end{array}\right]\right).$$
Then $f_{0,0}\circ f_{0,p-1}=f_{\ell,\ell+1}\circ f_{k,k-1}$. Similarly, we have $f_{0,0}\circ f_{p-1,0}=f_{\ell+1,\ell}\circ f_{k-1,k}$ (see Figure \ref{fig.2}(a)). Thus the IFS does not satisfy the open set condition \cite{BG}. Hence $T_{\varepsilon}$ is not a tile in this situation \cite{LaWa}.

\bigskip
\noindent {\bf Acknowledgements:} The first author gratefully acknowledges the support of K. C. Wong Education Foundation and DAAD. The authors also would like to thank  Professor Jun Luo of Sun Yat-Sen Uinversity for many inspiring discussions.

\bigskip

\end{document}